\theoremstyle{plain}
\newtheorem{thm}{Theorem}[section]
\newtheorem{lem}[thm]{Lemma}
\newtheorem{cor}[thm]{Corollary}
\theoremstyle{definition}
\theoremstyle{remark}
\newtheorem{rem}[thm]{Remark}
\newcommand{\E}{\mathbb{E}}
\title{Comparison results   for highly degenerate parabolic equations with univariate convex data and optimal strategies  for options on trading accounts }
\author{J\"org Kampen, Jan Vecer}
\begin{document}

\maketitle

\begin{abstract}
For linear multivariate purely second order highly degenerated  parabolic equations  with univariate convex data, 
monotonicity of  coefficient matrices implies monotonicity of the related value functions.  For multivariate data, comparison 
holds only for trivial coefficients, where we recover and extend existing results
for uniformly elliptic equations to highly degenerate equations by a different method of proof based on Green's identity.
The results extend to multivariate parabolic equations with first order terms and monotonically increasing univariate 
data. Related convexity criteria are derived from this new perspective.  The univariate data are assumed to have some upper bound on
exponential growth. Extensions of the argument to lognormal coordinates allow for applications beyond power 
options. Representation formulas of Greeks are implied.  Multivariate comparison with univariate data  
is used  in order to determine optimal strategies for multivariate passport options. These optimal strategies reveal new 
features of multivariate passport
options compared to univariate passport options due to correlation effects. Passport option values are determined by HJB-Cauchy 
problems with control spaces of measurable bounded functions.  Especially the values of  optimal strategy functions of  
passport options, where the control space of strategy values forms a hypercube, are located on the  inverse images of the vertices 
of that cube under the rotation matrix mapping  which defines the diagonalization of the correlation matrix of the underlying assets.   
Especially,  multivariate passport options cannot be reduced to
lookback options as  in the univariate case. However multivariate passport options inherit from univariate passport options the
feature that optimal strategies prescribe switching between long and short limit positions on a high frequency basis. This 
corresponds to control spaces of measurable functions, where more than H\"older regularity cannot be expected. As this is 
often not feasible, it is interesting to introduce  a new product of
 symmetric  passport options with positive trading position constraints. The comparison result is applied to this new product 
 in the case of one underlying share (next to a money account), where optimal strategies prescribe a maximal limit position 
 in the lower asset at each time. Hence, coefficients related to optimal  strategies are not continuous in general, but in 
 a more regular class which is more interesting from the trading perspective and  from the point of view of regularity theory than  the case of classical passport options.

\end{abstract}

\section{Introduction}

Natural sufficient conditions for the existence of smooth densities for parabolic equations of second order  were established 
by H\"ormander in \cite{H}. Later in the 1980's Gaussian  priori estimates of these densities and of multivariate  spatial, time and mixed 
derivatives of arbitrary order were established from the point of view of Malliavin calculus in \cite{KS}. Our interest 
in this paper is to combine these results with Green's identity and related properties of the adjoint of fundamental solutions 
in order to obtain comparison results for multivariate pure diffusions with univariate convex data. This extends generalizations of 
Hajek's univariate comparison results obtained in \cite{KC}. The H\"ormander condition seems natural in this context, because
\begin{description}
\item[a)] 
stronger conditions of degeneracy may lead to regularity constraints or even non-existence of densities,
\item[b)]
comparison of jump 
diffusions does not hold in general since it does not hold for simple Poisson processes as is shown in \cite{V}.
\end{description}
Furthermore, the 
restriction of univariate data is essential as the result cannot be extended to multivariate data. Here we give a different proof 
of a result obtained in \cite{T}. We also extend convexity criteria to the class of highly degenerate parabolic equations  of  pure 
second order.  No-go results of comparison for  multivariate data results are strengthened also in the sense that exponential upper 
bounds for the univariate data are allowed. This allows for applications to power options, where the payoff has polynomial growth 
conditions in lognormal coordinates which transfers to exponential growth conditions for normal coordinates. Even these growth 
conditions can be weakened, as we show in section 4. Fortunately, comparison results with univariate data are sufficient in order 
to determine optimal strategies for  multivariate forms of classical passport options, and of a new class of symmetric passport 
options which are considered in the essentially univariate case of one share and one money account. A review of the literature on 
passport options and an introduction to this new product is given in the last section of this paper. Additional applications 
concerning the representations of Greeks are considered in section 5.

\section{Comparison for univariate data and the adjoint of the fundamental solution}

First, let us consider pure diffusions of second order on the domain $D=[0,T]\times {\mathbb R}^n$, 
where $T>0$ is arbitrarily large and and ${\mathbb R}^n$ is the $n$--dimensional Euclidean space, i.e., we consider Cauchy problems of the form
\begin{equation}\label{cp00}
\left\lbrace \begin{array}{ll}
Lv\equiv v_{t}-\sum_{i,j=1}^na_{ij}\frac{\partial^2 v}{\partial x_i\partial x_j}=0\\
\\
v(0,x)=f(x),
\end{array}\right.
\end{equation}
on the domain $D$ with spatially dependent  coefficients $a_{ij}:{\mathbb R}^n\rightarrow {\mathbb R}$ 
which form a nonnegative coefficient matrix $(a_{ij})\geq 0$ at each $x\in {\mathbb R}^n$.  
Since we have local convexity violation for nontrivial  diffusions with multivariate payoffs (see below), 
and since some optimal control problems of practical interest can be formulated with restricted forms of payoffs, 
we are interested especially in univariate data, i.e., data of the form
\begin{equation}\label{h1}
f(x)=h(x_1)
\end{equation}
after appropriate renumeration of the components of $x=(x_1,\cdots,x_n)^T$. Note that convexity criteria and comparison transfer to stochastic sums.
Uniform ellipticity simplifies the argument a bit, but our methods apply to H\"ormander diffusions as well, and we will 
prove our results in the latter case. We denote the fundamental solution of (\ref{cp00}) by $p$.
 The adjoint equation is
\begin{equation}\label{adjoint00}
 \begin{array}{ll}
L^*u\equiv u_{t}+\sum_{i,j=1}\frac{\partial^2}{\partial x_i\partial x_j}(a_{ij}u)=0,
\end{array}
\end{equation}
where we denote the fundamental solution of (\ref{adjoint00}) by $p^*$. For any regular functions $u,v\in C^{1,2}$, we have Green's identity for variable coefficients
\begin{equation}\label{addiff0}
vL^*u-uLv=(uv)_t-\sum_{i=1}^n\frac{\partial}{\partial x_i}\left[\sum_{j=1}^n
\left(u a_{ij}\frac{\partial v}{\partial x_i}-va_{ij}\frac{\partial u}{\partial x_j}-uv\frac{\partial a_{ij}}{\partial x_j}\right)\right].
\end{equation} 
Now let $u,v$ be the fundamental solutions of $Lv=0$ and $L^*u=0$ respectively. We consider
\begin{equation}
\begin{array}{ll}
v(\sigma,z)=p(\sigma,z;s,y),~u(\sigma,z)=p^*(\sigma,z;t,x),~ t>s.
\end{array}
\end{equation}
Let $\epsilon>0$ be small enough such that $s+\epsilon<t-\epsilon$, and let  $B_R=\{z\in {\mathbb R}^n||z|\leq R\}$,
where $R$ is much larger than $|x|,|y|$. Integrating equation (\ref{addiff0}) over $[s+\epsilon,t-\epsilon]\times B_R$ and using $Lv=0$ 
and $L^*u=0$,  the left side of (\ref{addiff0}) becomes zero, and we get the equation
\begin{equation}\label{addiff1}
\begin{array}{ll}
\int_{B_R}u(t-\epsilon,z)v(t-\epsilon,z)-u(s+\epsilon,z)v(s+\epsilon,z)dz\\
\\
=\int_{s+\epsilon}^{t-\epsilon}\int_{\partial B_R}(\sum_{i=1}^n\frac{\partial}{\partial x_i}\left[\sum_{j=1}^n
\left(u a_{ij}\frac{\partial v}{\partial x_i}-va_{ij}\frac{\partial u}{\partial x_j}-uv\frac{\partial a_{ij}}{\partial x_j}\right)\right](\sigma,z)dSd\sigma,
\end{array}
\end{equation} 
where $\partial B_R$ denotes the boundary of $B_R$, and where the right side of (\ref{addiff1}) is a surface integral over the boundary 
$\partial B_R$ of the ball $B_R$. We are interested in conditions, where the right side of (\ref{addiff1}) converges to zero as 
$R\uparrow \infty$. We then have
\begin{equation}\label{addiff2}
\begin{array}{ll}
\int_{{\mathbb R}^n}u(t-\epsilon,z)v(t-\epsilon,z)dz=\int_{{\mathbb R}^n}u(s+\epsilon,z)v(s+\epsilon,z)dz,\\
\\
\mbox{or}~\\
\\
\int_{{\mathbb R}^n}v(t-\epsilon,z)p^*(t-\epsilon,z;t,x)dz=\int_{{\mathbb R}^n}u(s+\epsilon,z)p(s+\epsilon;s,y)dz.
\end{array}
\end{equation} 
In the limit $\epsilon \downarrow 0$ we get
\begin{equation}
v(t,x)=u(s,y),
\end{equation}
and, similarly for any $h\in {\mathbb R}^n$, we have
\begin{equation}
v(t,x+h)=u(s,y+h).
\end{equation}
 Next, for directions $h$, consider finite difference quotients 
\begin{equation}
D^{+}_hu(s,w)=\frac{u(s,w+h)-u(s,w)}{h}, D^{-}_hu(s,w)=\frac{u(s,w)-u(s,w-h)}{h},
\end{equation}  
and
\begin{equation}
 D^2_hu(s,w)=\frac{D^+_hu(s,w)-D^-_hu(s,w)}{h},
\end{equation}
and for multiindices $\alpha$,
let $D^{\alpha}_h$ denote the coordinate versions of these difference equations. Then
\begin{equation}\label{uvalpha}
 D^{\alpha}_hv(t,x)= D^{\alpha}_hu(s,y)
\end{equation}
holds for all $0\leq |\alpha|\leq 2$, and if the finite difference of the right side of  (\ref{addiff1}) goes to zero as $R\uparrow \infty$ 
and $h\downarrow 0$, then we get indeed
\begin{equation}
 D^{\alpha}_xv(t,x)= D^{\alpha}_yu(s,y)
\end{equation}
Such a formula can then be used in partial integration in order to obtain comparison results, but as convex functions usually 
have some growth at spatial infinity, such partial integrations are useful only for approximating functions, i.e., for data which live 
in a suitable functions space such as  $H^2\cap C^2$, where $H^2$ denotes the standard  Sobolev space of order $2$, i.e., the space where 
the derivatives up to second order are in $L^2$.\\

Before we introduce the H\"ormander conditions, we consider a simple set of conditions such that comparison holds. Here, we mention the 
more specific condition of uniform ellipticity, since this is a typical condition used in the literature. Indeed, for some applications, 
such as passport options, global regular existence  results for the associated Hamilton-Jacobi-Bellmann equations are unknown or
may not exist if the control space is not regular (as is the case especially  for classical  multivariate passport options). Next, we state the uniform ellipticity condition.
\begin{itemize}
\begin{item}[(C)] A uniform ellipticity condition holds, i.e., there exist $0<\lambda <\Lambda<\infty$ such that for all $s^l_i,~ 1\leq i\leq n$ and $\sigma \in[0,T]$
\begin{equation}
\lambda|z|^2\leq \sum_{ij}a_{ij}z_iz_j\leq \Lambda |z|^2.
\end{equation}
The second order coefficients $a_{ij}$ themselves and their partial derivatives up to second order are bounded and H\"older continuous. 
\end{item}\\

Next, we consider  the univariate data condition .
\begin{item}[(D)] For a finite constant $c>0$, the convex univariate data $f$ satisfy for some small $\epsilon >0$ and all $x\in {\mathbb R}$
\begin{equation}\label{ub}
|f(x_1)|\leq c\exp\left(c|x_1|^{2-\epsilon} \right) .
\end{equation}
\end{item}
\end{itemize}

\begin{rem}
Note the  constant $c$ in the exponent in (\ref{ub}). For lognormal coordinates $x_1=\ln(S_1)$  a power option payoff  $f(S_1)=S_1^m=\exp(mx_1)$ 
satisfies the condition D.
The argument for a  comparison considered here can be adapted to this situation.
\end{rem}

Next, let us reconsider the assumption C above.
H\"older continuity and boundedness of second derivatives of $a_{ij}$ 
implies that a)  we have Lipschitz continuous matrix entries $\sigma=(\sigma_{ij})$ such that $\sigma\sigma^T=(a_{ij})$, and that b) the adjoint density 
exists and is positive (as the density itself). Lipschitz continuity of $\sigma$ implies that ordinary stochastic ODE-theory is available and implies 
the existence of strong solutions of associated stochastic ODEs without  reference to ellipticity conditions.  Furthermore, the 
comparison arguments below depend essentially on an integrated Green's identity which leads to relation of finite differences of a density and its 
adjoint, and a priori estimates such that the right side in (\ref{addiff1}) goes to zero as the radius $R$ goes to infinity. 
Hence this argument does not 
depend on uniform ellipticity of the operator. Indeed, what is essentially needed  is the existence of a smooth density and 
appropriate a priori upper bounds such that the right side of the Green's identity   in (\ref{addiff1}) goes to zero as the radius $R$ 
goes to infinity. Hence  the H\"ormander estimates which are strengthened a bit by 
Kusuoka and Stroock in the context of Malliavin calculus (cf. \cite{KS}) are the appropriate estimates. We next introduce this 
class of highly degenerate parabolic equations defined by H\"ormander.\\

 For positive  natural numbers $m,n$, consider a matrix-valued function 
\begin{equation}\label{vcoeff}
x\rightarrow (v_{ji})(x),~1\leq j\leq n,~0\leq i\leq m,~x\in {\mathbb R}^n
\end{equation}
 on ${\mathbb R}^n$, and $m$ smooth vector fields 
\begin{equation}\label{vvec}
V_i=\sum_{j=1}^n v_{ji}(x)\frac{\partial}{\partial x_j},
\end{equation}
where $0\leq i\leq m$. These vector fields define a
Cauchy problem on $[0,\infty)\times {\mathbb R}^n$ for the distribution $p$ of the form
\begin{equation}
	\label{hoer1}
	\left\lbrace \begin{array}{ll}
		\frac{\partial p}{\partial t}=\frac{1}{2}\sum_{i=1}^mV_i^2p+V_0p\\
		\\
		p(0,x;y)=\delta_y(x),
	\end{array}\right.
\end{equation}
where $\delta_y(x)=\delta(x-y)$ is the Dirac delta distribution with an argument shifted by the vector $y\in {\mathbb R}^n$. The density with 
arguments $(t,x)$ is smooth on $(0,\infty)\times {\mathbb R}^n$  for all parameters  $s<t$ and $y\in {\mathbb R}^n$ if
for all $x\in {\mathbb R}^n$  we have
\begin{equation}
H_x={\mathbb R}^n.
\end{equation}
Here, for each $x\in {\mathbb R}^n$ the set $H_x=\cup_{n=0}^{\infty}H^n_x$ is defined inductively as follows.
 For $n=0$ let
\begin{equation}\label{Hoergenx}
\begin{array}{ll}
H^0_x:=\mbox{span}{\Big\{} V_i(x)|1\leq i\leq m {\Big \}},
\end{array}
\end{equation}
and given $H^n_x$ for $n\geq 0$ define

\begin{equation}\label{Hoergenx2}
\begin{array}{ll}
H^{n+1}_x:=H^n_x\cup \mbox{span}{\Big\{}  \left[V_j,V_k \right](x),
 |~0\leq j,k \leq m {\Big \}},
\end{array}
\end{equation}
and where $\left[.,.\right]$ are the Lie bracket of vector fields. We say that the H\"ormander condition (H)  for (\ref{hoer1}) is satisfied, if
\begin{equation}\label{HC}
\forall x\in {\mathbb R}^n~H_x:=\cup_{n=0}^{\infty}H^n_x={\mathbb R}^n.
\end{equation}
 Usually this goes with the assumption that
the coefficients of the vector fields are smooth (i.e., $C^{\infty}$) and bounded  with bounded derivatives, i.e., 
$v_{ji} \in C_{b}^{\infty}\left({\mathbb R}^n \right)$. Linear growth for the functions $v_{ji}$ themselves is allowed such that we use 
the weaker assumption of coefficients with linear growth and bounded derivatives of arbitrary order or
$v_{ji} \in C_{b,l}^{\infty}\left({\mathbb R}^n \right)$ in symbols. If the equation in (\ref{hoer1}) equals a pure diffusion without 
drift as in (\ref{cp00}), then we speak of a pure H\"ormander diffusion. Note that in this case
\begin{equation}\label{ph}
\frac{1}{2}\sum_{i=1}^mV_i^2+V_0=\sum_{ij}^na^h_{jk}\frac{\partial^2}{\partial x_j\partial x_k }
\end{equation}
for some matrix-valued function $x\rightarrow (a^h_{jk}(x))$ which is elliptic at each $x\in {\mathbb R}^n$. Here the upper script $h$ just 
reminds us that the coefficient matrix $(a^h_{ij})$ represents the second order coefficient matrix of a pure H\"ormander diffusion.  The relation 
to diffusion processes is via $(a_{jk})=\sigma\sigma^T$, where the condition H ensures that the latter condition exists.
 The main result in \cite{KS} extending the analysis in \cite{H} is

\begin{thm}
	\label{stroock}
	Consider a $d$-dimensional diffusion process of the form
	\begin{equation}\label{stochm}
		\mathrm{d}X_t \ = \ \sum_{i=1}^d\sigma_{0i}(X_t)\mathrm{d}t + \sum_{j=1}^{d}\sigma_{ij}(X_t)\mathrm{d}W^j_t
	\end{equation}
with $X(0)=x\in {\mathbb R}^d$ with values in ${\mathbb R}^d$ and on a time interval $[0,T]$.
Assume that $\sigma_{0i},\sigma_{ij}\in C^{\infty}_{lb}$. Then the law of the process $X$ is absolutely continuous with respect to the 
Lebesgue measure, and the density $p$ exists and is smooth, i.e. 
\begin{equation}
\begin{array}{ll}
p:(0,T]\times {\mathbb R}^d\times{\mathbb R}^d\rightarrow {\mathbb R}\in C^{\infty}\left( (0,T]\times {\mathbb R}^d\times{\mathbb R}^d\right). 
\end{array}
\end{equation}
Moreover, for each nonnegative natural number $j$, and multiindices $\alpha,\beta$ there are increasing functions of time
\begin{equation}\label{constAB}
A_{j,\alpha,\beta}, B_{j,\alpha,\beta}:[0,T]\rightarrow {\mathbb R},
\end{equation}
and functions
\begin{equation}\label{constmn}
n_{j,\alpha,\beta}, 
m_{j,\alpha,\beta}:
{\mathbb N}\times {\mathbb N}^d\times {\mathbb N}^d\rightarrow {\mathbb N},
\end{equation}
such that 
\begin{equation}\label{pxest}
\begin{array}{ll}
{\Bigg |}\frac{\partial^j}{\partial t^j} \frac{\partial^{|\alpha|}}{\partial x^{\alpha}} \frac{\partial^{|\beta|}}{\partial y^{\beta}}p(t,x,y){\Bigg |}\\
\\
\leq \frac{A_{j,\alpha,\beta}(t)(1+x)^{m_{j,\alpha,\beta}}}{t^{n_{j,\alpha,\beta}}}\exp\left(-B_{j,\alpha,\beta}(t)\frac{(x-y)^2}{t}\right).
\end{array}
\end{equation}
Moreover, all functions (\ref{constAB}) and  (\ref{constmn}) depend on the level of iteration of Lie-bracket iteration at which the H\"{o}rmander condition becomes true.
\end{thm}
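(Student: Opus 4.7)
The plan is to prove the theorem via Malliavin calculus, following the Kusuoka--Stroock strategy. First, I would set up the standard Malliavin derivative $D$ on Wiener space with respect to the driving Brownian motion and recall that, under the assumption $\sigma_{0i},\sigma_{ij}\in C^{\infty}_{lb}$, the strong solution $X_t$ of (\ref{stochm}) lies in $D^{\infty}$, the intersection of all Sobolev-like spaces associated with powers of $D$. The Malliavin derivative $DX_t$ satisfies a linear SDE obtained by formally differentiating (\ref{stochm}), from which one reads off moment bounds on $DX_t$ and iterated derivatives $D^kX_t$, with constants depending on $\|\sigma_{ij}\|_{C^{k}_{lb}}$.

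The key object is the Malliavin covariance matrix $\gamma_{X_t}^{ij}=\langle DX_t^i,DX_t^j\rangle_H$. The main obstacle, and the real content of the theorem, is to establish that $\gamma_{X_t}$ is a.s. invertible with $\det(\gamma_{X_t})^{-1}\in L^p$ for all $p\geq 1$, together with quantitative estimates in $t$. Here one uses Hörmander's condition (\ref{HC}): at the starting point $x$ the vectors obtained from iterated Lie brackets of $V_0,\ldots,V_m$ span ${\mathbb R}^d$ at some finite iteration level $N=N(x)$. The standard route is through Norris's lemma (a quantitative version of the fact that a continuous semimartingale whose bounded-variation and quadratic-variation parts are both small must have small coefficients), which one iterates $N$ times on the stochastic Taylor expansion of $\langle \gamma_{X_t}\xi,\xi\rangle$ to obtain, for any $p$,
\begin{equation}
\mathbb{E}\bigl[\det(\gamma_{X_t})^{-p}\bigr]\;\leq\; C_{p,N}\, t^{-\kappa(p,N)},
\end{equation}
where the exponent $\kappa$ grows with the iteration level $N$; this is exactly where the powers $n_{j,\alpha,\beta}$ in (\ref{pxest}) originate.

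With this non-degeneracy in hand I would apply the Malliavin integration-by-parts formula: for any smooth test function $\varphi$ and any multiindex $\beta$,
\begin{equation}
\mathbb{E}\bigl[(\partial^{\beta}\varphi)(X_t)\bigr]\;=\;\mathbb{E}\bigl[\varphi(X_t)\, H_{\beta}(t,x)\bigr],
\end{equation}
where the weight $H_{\beta}$ is a polynomial in the entries of $DX_t$, $D^kX_t$ and $\gamma_{X_t}^{-1}$, assembled via Skorohod integrals. Taking $\varphi=\mathbf 1_{(-\infty,y]}$ via standard regularisation shows that the law of $X_t$ has a smooth density $p(t,x,y)$, and differentiating the integration-by-parts formula with respect to $x$ (using that $X_t$ depends smoothly on the initial condition by the variational equation) and with respect to $t$ (using the PDE $\partial_tp=\tfrac12\sum V_i^2p+V_0p$ to convert $\partial_t$ into spatial derivatives) gives smoothness jointly in $(t,x,y)$. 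Hölder's inequality applied to the weight then produces the polynomial factors $(1+x)^{m_{j,\alpha,\beta}}$ and the $t$--power $t^{-n_{j,\alpha,\beta}}$, with constants tracking the Lie-bracket iteration level, as asserted at the end of the theorem.

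For the Gaussian decay in $|x-y|$, the localisation argument of Kusuoka--Stroock would be used: fix $y\neq x$, and decompose by an exponential Girsanov tilt that shifts the drift by a direction pointing from $x$ to $y$. Under the tilted measure, the process is pushed toward $y$, so the previous polynomial-in-$1/t$ estimates apply, while the Radon--Nikodym derivative contributes a factor $\exp(-B(t)|x-y|^2/t)$ after optimising over the tilt; this yields the exponential factor in (\ref{pxest}). The hard part is unquestionably the quantitative non-degeneracy of $\gamma_{X_t}$ via the iterated Norris lemma, because tracking the exact $t$-powers $n_{j,\alpha,\beta}$ in terms of the Hörmander iteration level is both combinatorially and analytically delicate; everything else is a fairly mechanical consequence of that estimate combined with the Malliavin calculus machine.
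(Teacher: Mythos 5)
This theorem is not proved in the paper at all: it is quoted verbatim as the main result of Kusuoka--Stroock \cite{KS} (extending H\"ormander \cite{H}), so there is no in-paper argument to compare against. Your sketch is a faithful outline of the actual proof in the cited literature: membership of $X_t$ in $D^{\infty}$, quantitative invertibility of the Malliavin covariance matrix via an iterated Norris-type lemma under the H\"ormander condition (which is where the $t$-powers tied to the bracket-iteration level come from), Malliavin integration by parts for smoothness of the density, smoothness in $x$ and $t$ via the variational equation and the forward PDE, and a Girsanov/localisation argument for the Gaussian factor. One point worth making explicit: as transcribed in the paper the theorem assumes only $\sigma_{0i},\sigma_{ij}\in C^{\infty}_{lb}$ and states no nondegeneracy hypothesis, under which the conclusion is false (take $\sigma_{ij}\equiv 0$); your proposal correctly reinstates the H\"ormander condition (\ref{HC}), which the paper only invokes implicitly through the final sentence about the Lie-bracket iteration level, so your reading supplies the hypothesis the statement actually needs.
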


Theorem \ref{stroock}  is also sometimes formulated in a probabilistic manner. We note
\begin{cor}
	In the situation of Theorem \ref{stroock} above, solution $X_t^x$ starting at $x$ is in the standard Malliavin space $D^{\infty}$, and there are 
	constants $C_{l,q}$ depending on the derivatives of the drift and dispersion coefficients such that for some constant $\gamma_{l,q}$
\begin{equation}\label{xprocessest}
|X_t^x|_{l,q}\leq C_{l,q}(1+|x|)^{\gamma_{l,q}}.
\end{equation}
Here $|.|_{l,q}$ denotes the norm where derivatives up to order $l$ are in $L^q$ (in the Malliavin sense).
\end{cor}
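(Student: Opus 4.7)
The plan is to deduce the corollary from the underlying Malliavin calculus machinery that also yields Theorem \ref{stroock}, proceeding by induction on the Malliavin order $l$. The basic ingredient is that under the assumption $\sigma_{0i},\sigma_{ij}\in C^{\infty}_{lb}$, the flow $x\mapsto X^x_t$ is smooth and all Malliavin derivatives $D^{l}X^{x}_{t}$ satisfy linear SDEs whose coefficients are bounded (being derivatives of the $\sigma$'s) while the inhomogeneous driving terms are polynomial expressions in lower order Malliavin derivatives.

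First, for $l=0$, I would record the classical moment estimate: applying It\^o's formula to $(1+|X^{x}_{t}|^{2})^{q/2}$, using the linear growth of $\sigma_{0i},\sigma_{ij}$, the Burkholder–Davis–Gundy inequality, and Gronwall's lemma gives
\begin{equation*}
\E\,\sup_{t\in[0,T]}|X^{x}_{t}|^{q}\leq C_{0,q}(1+|x|)^{q},
\end{equation*}
which supplies the required polynomial growth in $|x|$. This is the only place where the $x$-dependence is genuinely produced; at all higher levels of the induction the factor $(1+|x|)^{\gamma_{l,q}}$ is propagated by H\"older's inequality rather than enlarged in any essential way.

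Second, for $l=1$, I would differentiate the SDE in the Malliavin sense to obtain that $D_{r}X^{x}_{t}$ solves, for $r\leq t$, a linear SDE of the form
\begin{equation*}
D_{r}X^{x}_{t}=\sigma(X^{x}_{r})+\int_{r}^{t}\nabla\sigma_{0}(X^{x}_{s})D_{r}X^{x}_{s}\,\mathrm ds+\sum_{j}\int_{r}^{t}\nabla\sigma_{\cdot j}(X^{x}_{s})D_{r}X^{x}_{s}\,\mathrm dW^{j}_{s}.
\end{equation*}
Since $\nabla\sigma_{0}$ and $\nabla\sigma_{\cdot j}$ are bounded, standard BDG/Gronwall estimates bound $\E|D_{r}X^{x}_{t}|^{q}$ uniformly in $r\in[0,t]$ by $C_{1,q}(1+|x|)^{q}$, using the $l=0$ step to control the initial-value term $\sigma(X^{x}_{r})$. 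Integrating in $r$ yields the $D^{1,q}$-norm estimate.

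For general $l$, the $l$-th Malliavin derivative satisfies an SDE of the same linear type driven by a polynomial combination of $D^{j}X^{x}_{s}$ for $j<l$, weighted by higher derivatives of the $\sigma$'s, which remain bounded by assumption. Applying the induction hypothesis together with H\"older's inequality (to decompose multilinear terms in products of lower-order derivatives) and Gronwall's lemma yields the announced bound with some $\gamma_{l,q}$ aggregating the polynomial exponents from the recursion. The main obstacle here is really bookkeeping: correctly writing the SDE for $D^{l}X$ via Fa\`a di Bruno, and tracking how each multiplicative combination of lower-order terms contributes to $\gamma_{l,q}$ through H\"older exponents. Once this is done, membership in $D^{\infty}$ is immediate from the finiteness of every $|X^{x}_{t}|_{l,q}$.
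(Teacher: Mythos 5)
Your proposal is correct, but note that the paper does not actually prove this corollary at all: it is stated as a probabilistic reformulation of the Kusuoka--Stroock results and simply carried by the citation to \cite{KS}, so there is no in-paper argument to match against. What you supply is the canonical proof from the Malliavin-calculus literature, and it is sound: the $l=0$ moment bound via It\^o, Burkholder--Davis--Gundy and Gronwall under the linear-growth hypothesis is exactly where the factor $(1+|x|)$ enters; the first Malliavin derivative solves the linear SDE with bounded coefficients $\nabla\sigma_{0},\nabla\sigma_{\cdot j}$ (bounded because all derivatives of the coefficients are bounded in $C^{\infty}_{lb}$) and inhomogeneity $\sigma(X^{x}_{r})$ of linear growth, so the $x$-dependence is inherited from level zero; and the higher-order derivatives obey linear SDEs driven by multilinear combinations of lower-order derivatives weighted by bounded coefficient derivatives, so induction plus H\"older and Gronwall closes the argument and finiteness of all $|X^{x}_{t}|_{l,q}$ gives membership in $D^{\infty}$. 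Two small points deserve explicit mention if you write this out in full: the Malliavin Sobolev norm of order $l$ involves the $L^{2}([0,T]^{j})$-norm in the time arguments of the $j$-th iterated derivative, so you should record that your BDG/Gronwall bounds are uniform in $r$ (respectively in $(r_{1},\dots,r_{j})$) over the finite horizon before integrating; and at level $l=1$ the exponent can be taken equal to that of level $0$, since only the initial term $\sigma(X^{x}_{r})$ carries $x$-dependence, with genuine aggregation of exponents $\gamma_{l,q}$ occurring only through the multilinear driving terms at orders $l\geq 2$, which is consistent with the statement.
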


However, the upper bounds  of spatial derivatives obtained in  \cite{KS}  are not finite on the whole space, an effect which limits existence 
of regular global solutions of  H\"ormander difffusions. Since these upper bounds are natural in general, we have no comparison without 
existence of global regular solutions. This has to be investigated on a case by case basis. For our method we need H\"older continuity for spatial 
derivative up to second order for  the solution of the the pure H\"ormander diffusion problem. Therefore we include existence of global 
regular solutions in the following assumption.
\begin{itemize}
\item[(HE)] We assume that the H\"ormander condition in (\ref{HC}) is satisfied and that we have a pure H\"ormander diffusion, i.e., that 
the condition in (\ref{ph}) is satisfied for the coefficient matrix $(a_{ij})$ in (\ref{cp00+++}) below. Furthermore we assume that the Cauchy problem in   
(\ref{cp00+++})  has a global classical solution which is in $C^{2,\alpha}$,  i.e., a classical solution in $C^{1,2}$ with finite spatial H\"older 
norms for spatial derivatives up to second order on the whole domain of ${\mathbb R}^n$.
\end{itemize}
 We remark that there is no loss of generality if we assume the coefficient matrices to be symmetric. The main comparison results is
\begin{thm} Let $(a_{ij}),(a'_{ij})$ be two matrices of component functions which equal  $n\times n$-matrices $\sigma\sigma^T$ and $\sigma'\sigma'^T$ respectively, 
and such that the conditions in Theorem \ref{stroock} hold for $\sigma_{ij}$. assume that $f(x)=h_1(x_1)$ satisfies condition D. Assume that 
condition(HE is satisfied for $(a_{ij}),(a'_{ij})$ . On the domain $[0,T]\times {\mathbb R}^n$ consider a  Cauchy problem of the form
\begin{equation}\label{cp00+++}
\left\lbrace \begin{array}{ll}
Lv'\equiv v_{t}-\sum_{i,j=1}^na_{ij}\frac{\partial^2 v'}{\partial x_i\partial x_j}=0,\\
\\
v'(0,x)=h_1(x_1),
\end{array}\right.
\end{equation}
and an analogous Cauchy problem with coefficient functions $((a'_{ij})$ respectively. If $(a_{ij})\leq (a'_{ij})$ in the sense that $(a'_{ij})-(a_{ij})$ is a 
nonnegative matrix,  i.e.,  has nonnegative eigenvalues, 
and $a_{11}<a'_{11}$, then $v<v'$ on the domain $(0,T]\times {\mathbb R}^n$.
\end{thm}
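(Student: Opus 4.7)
The plan is to reduce the comparison to a sign analysis of a Duhamel source term, and to extract the needed positivity from the Green's identity developed earlier in the section. First I would introduce $w = v' - v$ and subtract the two Cauchy problems to obtain
\begin{equation*}
Lw \equiv w_t - \sum_{i,j} a_{ij} w_{x_i x_j} = \sum_{i,j}\bigl(a'_{ij}-a_{ij}\bigr)\, v'_{x_i x_j} =: g(t,x), \qquad w(0,x)=0.
\end{equation*}
Under condition (HE) a classical Duhamel formula with the fundamental solution $p$ of $L$ gives
\begin{equation*}
w(t,x) = \int_0^t\!\!\int_{\mathbb{R}^n} p(t,x;s,z)\, g(s,z)\, dz\, ds,
\end{equation*}
so it suffices to establish $g \geq 0$ (respectively $g > 0$ on a set of positive measure) in order to obtain $v \leq v'$ (respectively $v < v'$).

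The heart of the proof is to exhibit $g \geq 0$ via the adjoint identity (\ref{uvalpha}). Writing $v'(t,x) = \int p'(t,x;0,y)\, h_1(y_1)\, dy$ and applying (\ref{uvalpha}) for $|\alpha|=2$, I would use
\begin{equation*}
v'_{x_i x_j}(t,x) \;=\; \int \partial^2_{y_i y_j} p'^{*}(0,y;t,x)\, h_1(y_1)\, dy,
\end{equation*}
valid once the surface term in Green's identity vanishes as $R\uparrow \infty$. Since (D) does not immediately permit the required integration by parts, I would first treat a mollified, compactly supported approximant $h_1^k$ of $h_1$, for which integration by parts twice in $y$ is unproblematic and yields
\begin{equation*}
v'^{k}_{x_i x_j}(t,x) \;=\; \delta_{i1}\delta_{j1}\int p'^{*}(0,y;t,x)\,(h_1^k)''(y_1)\, dy \;\geq\; 0.
\end{equation*}
The source term for the approximating problem is then the scalar $g^k = (a'_{11}-a_{11})\, v'^{k}_{x_1 x_1}\geq 0$, so the Duhamel formula gives $w^k \geq 0$ and hence $v^k \leq v'^{k}$.

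To return to the data $h_1$ I would pass to the limit $k\to\infty$. The Kusuoka--Stroock bounds of Theorem \ref{stroock}, applied with $|\alpha|+|\beta|\leq 2$, give Gaussian decay of $p'^{*}$ and its first two spatial derivatives with a time-dependent rate $B_{j,\alpha,\beta}(t)$ which, combined with the subquadratic growth in (D), dominates $h_1(y_1)$ on every compact $x$--set. Dominated convergence then transfers the nonnegativity $v'^{k}_{x_1 x_1}\geq 0$ and the vanishing of the off-diagonal and non-$(1,1)$ diagonal Hessian entries to $v'$ in the limit, so that $g = (a'_{11}-a_{11})\, v'_{x_1 x_1}\geq 0$ and the weak comparison $v \leq v'$ is immediate. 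The strict form $v < v'$ on $(0,T]\times\mathbb{R}^n$ then follows from the strong parabolic maximum principle: the hypothesis $a_{11}<a'_{11}$ together with the strict positivity of $p'^{*}$ forces $g>0$ on a nonempty open set whenever $h_1$ is genuinely (not affinely) convex, and the positivity of $p(t,x;s,z)$ propagates strict positivity of $w$ to the whole domain.

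The main obstacle is precisely the limit step together with the justification of (\ref{uvalpha}) in the form needed above. The growth rate $\exp(c|x_1|^{2-\epsilon})$ in (D) is very close to the Gaussian decay $\exp(-B(t)(x-y)^2/t)$ of $p'^{*}$, and one must verify that the constants $B_{j,\alpha,\beta}(t)$ in (\ref{pxest}) are large enough on every $[\eta,T]$, $\eta > 0$, to control both the boundary contributions in Green's identity uniformly in $R$ and the Hessian integrands as $k\to\infty$; this is where the polynomial prefactor $(1+|x|)^{m_{j,\alpha,\beta}}$ in the Kusuoka--Stroock bound has to be absorbed carefully. Condition (HE), by providing $C^{2,\alpha}$ regularity of $v'$ on the whole space, ensures that the integration-by-parts is classical rather than only distributional, which is what closes the argument.
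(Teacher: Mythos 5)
Your overall architecture is the same as the paper's: subtract the two Cauchy problems, represent the difference by Duhamel against a positive fundamental solution, and try to get the sign of the source term by moving derivatives onto the convex univariate data through the adjoint relation (\ref{uvalpha}), with a truncation/mollification and a limit at the end. The decisive step, however, does not hold: you claim that after integration by parts $v'^{k}_{x_i x_j}=\delta_{i1}\delta_{j1}\int p'^{*}(0,y;t,x)\,(h_1^k)''(y_1)\,dy$, i.e.\ that all Hessian entries of $v'$ except the $(1,1)$ entry vanish. For spatially varying coefficients this is false: the value function of univariate data depends on \emph{all} coordinates through the coefficients. Take $n=2$, $a'_{11}=a(x_2)$ smooth, bounded and nonconstant, $a'_{22}=1$, $a'_{12}=0$, $h_1(x_1)=x_1^2$; then $v'(t,x)=x_1^2+2\int_0^t (P_s a)(x_2)\,ds$ (with $P_s$ the heat semigroup in $x_2$), so $v'_{x_2x_2}\neq 0$ and can have either sign. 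Consequently the source is the full trace $g=\mathrm{Tr}\left((A'-A)D^2v'\right)$, and since $D^2v'$ need not be positive semidefinite, nonnegativity of $A'-A$ together with $a_{11}<a'_{11}$ does not by itself give $g\geq 0$. The error enters when you read the relation between $x$-derivatives of $p'$ and $y$-derivatives of $p'^{*}$ as an exact pointwise identity in the original coordinates and then integrate by parts: the duality $p'(t,x;0,y)=p'^{*}(0,y;t,x)$ relates $x$-derivatives of $p'$ to derivatives of $p'^{*}$ in its $x$-parameter slot, not in the integration variable $y$, and for variable coefficients no identity converts $\partial^2_{x_ix_j}p'$ into $\partial^2_{y_iy_j}p'^{*}$ exactly; the example above shows the conclusion you draw from it cannot be true.

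This is precisely the difficulty the paper's proof is built around, and it never asserts that the transverse Greeks vanish. Instead it truncates and mollifies the data, performs a small rotation of coordinates so that the cutoff data lies in $H^2\cap C^2$ of the multidimensional domain and the partial integration in (\ref{pp***}) is legitimate, shows only that $\partial^2\tilde{u}^{\epsilon,\delta,R}/\partial\tilde{x}_1^2>0$ on a ball of radius $R/2$ where, for rotation weights $\lambda_1$ close to $1$ and the others small, this term \emph{dominates} the remaining Hessian contributions in $\mathrm{Tr}\left((\tilde A'-\tilde A)D^2\tilde u^{\epsilon,\delta,R}\right)$, and controls the exterior of the ball by smallness of $\tilde p'$ before letting $R\uparrow\infty$, $\epsilon,\delta\downarrow 0$ in (\ref{delta2*}). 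Your Duhamel decomposition (operator with the smaller coefficients applied to $v'-v$, Hessian of $v'$ in the source) is a legitimate mirror image of the paper's, and your attention to the Kusuoka--Stroock bounds versus the $\exp\left(c|x_1|^{2-\epsilon}\right)$ growth in the limit step is well placed; but without some device replacing the dominance-on-a-ball argument (or another way to control the off-$(1,1)$ Hessian contributions), the sign of the source term, and hence the comparison, is not established. A secondary point: the strict inequality $v<v'$ requires $h_1$ to be non-affine (otherwise $v\equiv v'$), so the appeal to the strong maximum principle needs that hypothesis made explicit rather than parenthetical.
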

The existence assumption in (HE) can be eliminated if we know existence for other reasons, e.g., if the assumptions D and C hold, because uniform 
ellipticity and regularity of coefficients together with the growth and data condition in D implies existence. Therefore, we have
\begin{cor} Let $(a_{ij}),(a'_{ij})$ be two matrices of component functions which satisfy the condition C, and assume that $f(x)=h_1(x_1)$ satisfies 
condition (D). We compare the solution of (\ref{cp00}) with data (\ref{h1}) with the solution of the Cauchy problem
\begin{equation}\label{cp00+-}
\left\lbrace \begin{array}{ll}
Lv'\equiv v'_{t}-\sum_{i,j=1}^na_{ij}\frac{\partial^2 v'}{\partial x_i\partial x_j}=0,\\
\\
v'(0,x)=h_1(x_1),
\end{array}\right.
\end{equation}
If $(a_{ij})\leq (a'_{ij})$ in the sense that $(a'_{ij})-(a_{ij})$ is a nonnegative matrix,  i.e.,  has nonnegative eigenvalues, and $a_{11}<a'_{11}$, 
then $v<v'$ on the domain $(0,T]\times {\mathbb R}^n$.
\end{cor}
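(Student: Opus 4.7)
The plan is to represent the difference $w := v'-v$ by a Duhamel integral against the fundamental solution of $L$ and to argue that the integrand is pointwise nonnegative, with strict positivity on a set of positive Lebesgue measure coming from the hypothesis $a_{11}<a'_{11}$. Since $v$ and $v'$ share the datum $h_1(x_1)$ and satisfy $Lv=0$ and $L'v'=0$ respectively, the difference obeys $Lw=(L-L')v'=\sum_{i,j}(a'_{ij}-a_{ij})\,\partial^2_{ij}v'$ with zero initial condition. Hypothesis (HE) together with Theorem \ref{stroock} furnishes a smooth positive fundamental solution $p_L$ of $L$, so Duhamel's principle yields
\[
 w(t,x)=\int_0^t\!\!\int_{\mathbb{R}^n} p_L(t-s,x;y)\sum_{i,j}\bigl(a'_{ij}(y)-a_{ij}(y)\bigr)\,\partial^2_{y_iy_j}v'(s,y)\,dy\,ds.
\]

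Since $(a'_{ij}(y))-(a_{ij}(y))$ is positive semidefinite at every $y$, it decomposes as $\sum_k \zeta^{(k)}(y)\zeta^{(k)}(y)^T$, so the inner sum equals $\sum_k \bigl(\zeta^{(k)}\cdot\nabla_y\bigr)^2 v'(s,y)$. The task thus reduces to showing that the Hessian of $v'$ is pointwise positive semidefinite, and this is the heart of the argument. Writing $v'(s,y)=\int p_{L'}(s,y;0,z)h_1(z_1)\,dz$, the Green's-identity relation $D^{\alpha}_x v=D^{\alpha}_y u$ for $|\alpha|\leq 2$ derived in the preceding section permits transferring the second spatial derivatives from the $y$-argument of $p_{L'}$ onto the $z$-argument of its adjoint $p_{L'}^{*}$; two subsequent integrations by parts against $h_1(z_1)$ then represent directional second derivatives of $v'$ as nonnegative integrals of $h_1''(z_1)\geq 0$ weighted by the positive adjoint density. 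This is precisely the convexity criterion announced in the introduction and yields $(\zeta\cdot\nabla_y)^2 v'\geq 0$ for every direction $\zeta$.

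Strictness of $v<v'$ then follows from $a_{11}<a'_{11}$: the $e_1$-direction enters the decomposition of $(a')-(a)$ with strictly positive coefficient, and $\partial^2_{11}v'(s,y)=\int p_{L'}^{*}(0,z;s,y)h_1''(z_1)\,dz>0$ on a set of positive measure, since $h_1$ is nontrivially convex and $p_{L'}^{*}>0$; positivity of $p_L$ then turns this into $w(t,x)>0$ for all $t\in(0,T]$. I expect the main obstacle to lie in the rigorous justification of the Green's-identity transfer and the associated integrations by parts on all of $\mathbb{R}^n$: the surface integrals on $\{|z|=R\}$ must be shown to vanish as $R\uparrow\infty$ by playing the a priori Gaussian bounds in (\ref{pxest}) of Theorem \ref{stroock} against the exponential growth permitted by condition (D), and the interchange of finite differences with these unbounded integrals must be controlled for derivatives up to second order before one passes to the limit $h\downarrow 0$.
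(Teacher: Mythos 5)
Your proposal follows essentially the same route as the paper's proof: write the difference as a Duhamel integral of $\mathrm{Tr}\bigl((A'-A)D^2(\cdot)\bigr)$ against a positive fundamental solution (the paper does this in (\ref{deltau})--(\ref{delta2*}), with the roles swapped -- it propagates with $\tilde p'$ and puts the Hessian of the solution for the \emph{smaller} matrix in the source, which is an immaterial symmetric choice), and then obtain positivity of the source by transferring the second derivatives through the Green's-identity/adjoint relation (\ref{adeq*}) onto the convex univariate datum, exactly as in (\ref{pp***}), with strictness coming from $a_{11}<a'_{11}$ and positivity of the adjoint density. The one substantive difference is where the technical weight sits. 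You assert that the full Hessian of $v'$ is pointwise positive semidefinite by performing the transfer and two integrations by parts directly on all of ${\mathbb R}^n$ for the raw datum $h_1(x_1)$, and you flag the vanishing of the boundary terms and the limit interchanges as the main obstacle. The paper never makes (or needs) that global claim: because $h_1(x_1)$ neither decays nor is even bounded, and is constant in the $n-1$ orthogonal directions, the paper works instead with the truncated and mollified datum $h^R_{1,\epsilon,\delta}\in H^2\cap C^2$ together with a small rotation of coordinates, obtains positivity only of the dominating derivative $\partial^2\tilde u^{\epsilon,\delta,R}/\partial\tilde x_1^2$ on the ball $B_{R/2}$ where it controls the other entries of $\mathrm{Tr}\bigl((A'-A)D^2\tilde u\bigr)$, kills the exterior contribution by decay of $\tilde p'$, and only then passes to the limit $\epsilon,\delta\downarrow 0$, $R\uparrow\infty$. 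In other words, the approximation scheme you omit is precisely the device by which the paper discharges the obstacle you defer; if you want to avoid it, you must actually carry out the global justification (Gaussian bounds of Theorem \ref{stroock} against the growth allowed by (D), plus the lack of decay in the directions $x_2,\dots,x_n$), or else adopt the paper's cutoff--mollify--rotate argument. Note also that, as in the paper, strict inequality requires $h_1$ to be non-affine, which you correctly (if implicitly) assume.
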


\begin{proof}(Martingale theorem.)
For a small $\epsilon$, consider a convolution $h_{1,\epsilon}=h_1\ast_{sp}G_{\epsilon}\in C^{\infty}$ and such that $h_{1,\epsilon}$ remains convex 
and $\lim_{\epsilon\downarrow 0}h_{1,\epsilon}=h_1$ (pointwise). For example $G_{\epsilon}$ may be chosen to be a heat kernel, i.e., the fundamental 
solution  of $p_t-\epsilon  p_{x_1x_1}=0$. Next, for small $\delta >0$ and large $R$ define 
\begin{equation}
h_{1,\delta}^R=:\left\lbrace\begin{array}{ll}
h_{1}(x)~\mbox{if}~|x|\leq R\\
\\
h_{1}(x)\exp(-\delta|x-R|^2)~\mbox{if}~|x|> R,
\end{array}\right.
\end{equation}
and let $h^R_{1,\epsilon,\delta}=h^R_{1,\delta}\ast_{sp}G_{\epsilon}$. 
The latter is not convex anymore, but convex on the large interval $[-R,R]$, and we have $\lim_{\epsilon,\delta\downarrow 0,R\uparrow \infty} 
h^R_{1,\epsilon,\delta}=f$ pointwise. Let  $p,p'$ be 
the fundamental solutions of $p_t- \sum_{i,j}^na_{ij}p_{x_ix_j}=0$ and of  $p'_t- \sum_{i,j}^na'_{ij}p'_{x_ix_j}=0$.
The value functions $v,v'$ have the representations
\begin{equation}\label{vv}
v(t,x)=\int_{{\mathbb R}^n}
h_1(y_1)p(t,x;0,y)dy,
\end{equation}
and
\begin{equation}\label{vv1}
v'(t,x)=\int_{{\mathbb R}^n}
h_1(y_1)p'(t,x;0,y)dy,
\end{equation}
and the approximative value functions have the representations
\begin{equation}\label{veps}
v^{\epsilon,\delta,R}(t,x)=\int_{{\mathbb R}^n}
h_{1,\epsilon,\delta}^R(y_1)p
(t,x;0,y)dy
\end{equation}
and
\begin{equation}\label{veps1}
v'^{\epsilon,\delta,R}(t,x)=\int_{{\mathbb R}^n}
h_{1,\epsilon,\delta}^R(y_1)p'
(t,x;0,y)dy.
\end{equation}
We have $p,p'>0$ by assumption $C$ and there exists an adjoint fundamental solution, i.e., $\left\langle Lv,u \right\rangle= 
\left\langle v,L^*u \right\rangle$, where $L$ is the operator of (\ref{cp00+-}), and $L^*$ is the adjoint operator.
In order to use partial integration and the adjoint, we introduce rotated coordinates $\tilde{x}$  such that $\tilde{x}\rightarrow 
h_{1,\epsilon,\delta}(\tilde{x})$ becomes a function on the whole domain of ${\mathbb R}^n$ which is in $H^2\cap C^2$ with respect 
to this multidimensional domain.  Since $x_1\rightarrow h_{1,\epsilon,\delta}^R(x_1)$ is univariate, we can always consider small 
rotations such that $\tilde{x}=\sum_{j=1}^n\lambda_jx_j$, where $\lambda_j>0,~ 1\leq j\leq n$ and such that $\lambda_1$ is close 
to $1$ and $\lambda_j,j\neq 0$ are small. Let $\tilde{p}$ and $\tilde{p}'$ denote the corresponding  fundamental solution in rotated 
coordinates, i.e., corresponding to $p,p'$ via the rotation transformation outlined.
Next, we use the relation (\ref{addiff1}) which leads to (\ref{uvalpha}), or for multiindices $0\leq |\alpha|\leq 2$ we have
\begin{equation}\label{adeq*}
D^{\alpha}_{\tilde{x}}\tilde{p}
(t,\tilde{x};s,\tilde{y})=D^{\alpha}_{\tilde{y}}\tilde{p}^{*}
(s,\tilde{y};t,\tilde{x}),~ t >s,
\end{equation}
where $\tilde{p}^{*}$ is the adjoint of $\tilde{p}$.
Let $\tilde{y}\rightarrow h_{1,\epsilon,\delta}^{R,\lambda}(\tilde{y})=h_{1,\epsilon,\delta}^{R,}(y_1)$ denote the rotational transform of the data. Then we have
\begin{equation}\label{pp***}
\begin{array}{ll}
D^{\alpha}_{\tilde{x}}\tilde{v}^{\epsilon,\delta,R}(t,\tilde{x})=\int_{{\mathbb R}^n}
h_{1,\epsilon,\delta}^{R,\lambda}(\tilde{y})D^{\alpha}_{\tilde{x}}p
(t,\tilde{x};0,y)dy\\
\\=\int_{{\mathbb R}^n}
h_{1,\epsilon,\delta}^{R,\lambda}(\tilde{y})D^{\alpha}_{\tilde{y}}\tilde{p}^{*}
(0,\tilde{y},t,\tilde{x})d\tilde{y}\\
\\
=\int_{{\mathbb R}^n}
\left( D^{\alpha}_{\tilde{y}}h_{1,\epsilon,\delta}^{R,\lambda}(\tilde{y})\right)\tilde{p}^{*}
(0,\tilde{y},t,\tilde{x})d\tilde{y}.
\end{array}
\end{equation}
Note that $\tilde{p}^{*}>0$ by (\ref{adeq*}) and assumption (C).
Now let $\tilde{A}=\left(\tilde{a}_{ij}\right)$ and   $\tilde{A}'=\left(\tilde{a}'_{ij}\right)$ denote the coefficient matrix in transformed 
rotated coordinates such that for all $1\leq i,j\leq n$ and all $\tilde{x},x$, we have $\tilde{a}_{ij}(\tilde{x})=a_{ij}(x)$ and  
$\tilde{a}'_{ij}(\tilde{x})=a_{ij}'(x).$  Assume that   $\tilde{p}$ is fundamental solution of 
\begin{equation}
\tilde{u}^{\epsilon,\delta,R}_{t}-\mbox{Tr}\left(\tilde{A} D^2\tilde{u}^{\epsilon,\delta,R} \right)=0,
\end{equation}
where we consider this problem along with some data $\tilde{u}(0,\tilde{x})=h_{\epsilon,\delta}^{R,\lambda}(\tilde{x})$, and where
\begin{equation}
\mbox{Tr}\left(\tilde{A} D^2\tilde{u}^{\epsilon,\delta,R} \right)=\sum_{j=0}^n\tilde{A}_{ij}\left( D^2_{jk}\tilde{u}^{\epsilon,\delta,R}\right) \delta_{ik},
\end{equation} 
and compare this to a solution
\begin{equation}
\tilde{u}'^{\epsilon,\delta,R}_{t}-\mbox{Tr}\left(\tilde{A}' D^2\tilde{u}'^{\epsilon,\delta,R} \right)=0
\end{equation}
with the same data.
For $\delta \tilde{u}=\tilde{u}'-\tilde{u}$, we get
\begin{equation}\label{deltau}
\delta \tilde{u}^{\epsilon,\delta,R}_{t}-\mbox{Tr}\left(\tilde{A}' D^2\delta \tilde{u}^{\epsilon,\delta,R} \right)=
\mbox{Tr}\left((\tilde{A}'-\tilde{A}) D^2\tilde{u}^{\epsilon,\delta,R} \right).
\end{equation}
As we have  zero data for the difference, we have
\begin{equation}\label{delta2*}
\delta \tilde{u}^{\epsilon,\delta,R}(t,\tilde{x})=\int_0^{t}\int_{{\mathbb R}^{n}}\mbox{Tr}\left((A'-A) 
D^2\tilde{u}^{\epsilon,\delta,R} \right)(s,\tilde{y})\tilde{p}'(t,\tilde{x};s,\tilde{y})d\tilde{y} ds.
\end{equation}
Now let $(t,\tilde{x})$ be given. For $\frac{R}{2}\gg |\tilde{x}|$ and $\delta,\epsilon >0$ small, we observe from (\ref{pp***})
 that for given $T$ we may choose $R$ large enough
\begin{equation}
\frac{\partial^2 \tilde{u}^{\epsilon,\delta,R}}{\partial \tilde{x}_1^2}
 (s,\tilde{y})>0,~ |\tilde{y}|\leq \frac{R}{2},~s\in [0,T]
\end{equation}
where for $\lambda_1$ close to $1$ and $\lambda_i,~ 2\leq i\leq n$ small this term dominates the other Greeks. We conclude that
\begin{equation}
\left(\left( \tilde{a}'_{11}- \tilde{a}_{11}\right)\frac{\partial^2 \tilde{u}^{\epsilon,\delta,R}}{\partial \tilde{x}^2}\right) 
 (s,\tilde{y})>0,~ |\tilde{y}|\leq \frac{R}{2},~s\in [0,T]
\end{equation}
is the dominating term in 
$\mbox{Tr}\left((A'-A) D^2\tilde{u}^{\epsilon,\delta,R} \right)$
in a ball of radius $\frac{R}{2}$. Outside that ball (since $|\tilde{x}|\ll \frac{R}{2}$) 
$\tilde{p}'(\tau,\tilde{x};s,y)$ becomes small such that 
from (\ref{delta2*}) we get
\begin{equation}\label{delta22}
\delta \tilde{u}^{\epsilon,\delta,R}(\tau,\tilde{x})>0,~ R~\mbox{large},~\epsilon,\delta~\mbox{small,}
\end{equation}
which relation holds also in the limit $\delta,\epsilon \downarrow 0$ and $R\uparrow\infty$.
\end{proof}

Next, we give consider variations of the argument above for comparison and derive convexity criteria for pure H\"ormander diffusions as further corollaries.  A further simple conclusion is that for 
pure H\"ormander diffusions with spatially nonconstant coefficients there are multivariate convex data such that convexity and comparison are locally violated.

For smooth convex data $f:{\mathbb R}^n\rightarrow {\mathbb R}$, consider the function $v^f:=v-f$, where $v$ satisfies the equation (\ref{cp00}).
Since $f$ is independent of time, we obviously  have
 \begin{equation}\label{cp001}
\left\lbrace \begin{array}{ll}
 v^f_{t}-\sum_{i,j=1}^na_{ij}\frac{\partial^2 v^f}{\partial x_i\partial x_j}-\sum_{i,j=1}^na_{ij}\frac{\partial^2 f}{\partial x_i\partial x_j}=0\\
\\
v^f(0,x)=0.
\end{array}\right.
\end{equation}
Hence, we have the representation
 \begin{equation}\label{cp001+}
\begin{array}{ll}
 v^f(t,x)=\int_0^t\int_{{\mathbb R}^n}\left(\sum_{i,j=1}^na_{ij}\frac{\partial^2 f}{\partial x_i\partial x_j}\right)(s,y)p(t,x;s,y)dyds,
\end{array}
\end{equation}
where $p$ is the fundamental solution of $p_{t}-\sum_{i,j=1}^na_{ij}\frac{\partial^2 p}{\partial x_i\partial x_j}=0$.

For data 
\begin{equation}
f_{\epsilon,\delta}^R=:\left\lbrace\begin{array}{ll}
f_{\epsilon}(x)~\mbox{if}~|x|=\sqrt{\sum_{i=1}^nx_i^2}\leq R\\
\\
f_{\epsilon}(x)\exp(-\delta|x-R|^2)~\mbox{if}~|x|=\sqrt{\sum_{i=1}^nx_i^2}> R,
\end{array}\right.
\end{equation}
we get the representation
 \begin{equation}\label{cp0011}
\begin{array}{ll}
\frac{\partial^2 v^{f,R}_{\epsilon,\delta}}{\partial x_i\partial x_j}(t,x)=\int_0^t\int_{{\mathbb R}^n}\left(D^{2}_{y_iy_j}
\left(\sum_{i,j=1}^na_{ij}\frac{\partial^2 f^R_{\epsilon,\delta}}{\partial x_i\partial x_j}\right)(s,y)\right)p^*(t,x;s,y)dyds.
\end{array}
\end{equation}
In the case of univariate data this simplifies considerably to

 \begin{equation}\label{cp0012}
\begin{array}{ll}
\frac{\partial^2 v^{h_1,R}_{\epsilon,\delta}}{\partial x_i\partial x_j}(t,x)=\int_0^t\int_{{\mathbb R}^n}\left(D^{2}_{y_iy_j}
\left(a_{11}\frac{d^2 h^R_{1,\epsilon,\delta}}{d x_1^2}\right)(s,y)\right)p^*(t,x;s,y)dyds.
\end{array}
\end{equation}
From (\ref{cp0011}) we get convexity criteria and failure of convexity of the value function for all nontrivial regular bounded 
coefficient matrices and some convex data. From (\ref{cp0012}) we get partial convexity and comparison for regular bounded coefficients  and univariate convex data. 
In order to state the convexity criterion, we need assumptions for more general  data. We assume
\begin{itemize}
\begin{item}[(D')] For a finite constant $c>0$ the convex  data $f:{\mathbb R}^n\rightarrow {\mathbb R }$ satisfy for some small $\epsilon >0$ and all $x\in {\mathbb R}^n$
\begin{equation}
|f(x)|\leq c\exp\left(c|x|^{2-\epsilon} \right) .
\end{equation}
\end{item}
\end{itemize}
We may abbreviate the coefficient matrix by  $A=(a_{ij})$ and the Hessian of the data $f$ by  $D^2f$. Alexandrov's result (cf. \cite{A})
tells 
us that $D^2f$ exists almost everywhere for convex functions, but in order to have a pointwise well-defined Hessian everywhere, 
we may convolute with a heat kernel of small dispersion, i.e., with fundamental solutions of $q^{\epsilon}_t=\epsilon \Delta q^{\epsilon}$ 
in order to have smooth convex approximations at hand.  We denote $f_{\epsilon}=f\ast q^{\epsilon}$, where $'\ast'$ denotes convolution. 
In the following, we say that a regular function is convex if its Hessian is nonnegative. We say that a function is strictly convex ,
if its Hessian is positive at all arguments of the domain ${\mathbb R}^n$. In the following we assume  that the assumptions D' and HE hold. 
Alternatively, we may  assume that D' and C hold of course.  
Now we have 
\begin{cor}\label{crit1}
Assume D' and HE hold for data and coefficients. Then the solution function $v$ of (\ref{cp00}) is convex if
\begin{equation}
\mbox{Tr}\left(AD^2f_{\epsilon}\right)~\mbox{is convex },
\end{equation} 
for $\epsilon>0$ small, or if the Hessian is a nonnegative $\left(D^2_{x_ix_j}\mbox{Tr}\left(AD^2f_{\epsilon}\right)\right)\geq 0$ on 
the whole domain and for any small $\epsilon >0$.
\end{cor}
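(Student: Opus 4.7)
The plan is to build directly on the identity (\ref{cp0011}) that has already been derived via Green's identity and the finite-difference duality (\ref{adeq*}). Since $v=v^f+f$ with $f$ itself convex under (D$'$), it is enough to show $D^2_x v^f \geq 0$ as a symmetric matrix; then adding $D^2 f \geq 0$ pointwise closes the argument.

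First I would fix an arbitrary direction $\xi\in{\mathbb R}^n$ and contract (\ref{cp0011}) with $\xi_i\xi_j$, obtaining
\begin{equation}
\xi^T D^2_x v^{f,R}_{\epsilon,\delta}(t,x)\,\xi \;=\; \int_0^t\!\!\int_{{\mathbb R}^n} \xi^T D^2_y\!\left[\mbox{Tr}\bigl(AD^2 f^R_{\epsilon,\delta}\bigr)\right]\!(s,y)\,\xi\;\, p^\ast(t,x;s,y)\,dy\,ds.
\end{equation}
On the ball $B_R$ the cutoff is inactive so $f^R_{\epsilon,\delta}$ agrees with the mollification $f_\epsilon=f\ast q^\epsilon$, and the hypothesis of the corollary then makes the integrand pointwise nonnegative on $[0,t]\times B_R$. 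Coupled with the positivity $p^\ast>0$ (which follows from (HE) together with Theorem \ref{stroock}), this gives nonnegativity of the integral restricted to that region.

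Next I would let $R\uparrow\infty$ and $\delta,\epsilon\downarrow 0$. The contribution from $\{|y|>R\}$, where the cutoff factor $\exp(-\delta|y-R|^2)$ spoils convexity of $f^R_{\epsilon,\delta}$, is controlled by the Gaussian decay (\ref{pxest}) of $p^\ast$ in $|y-x|$ against the sub-quadratic exponential growth permitted by (D$'$); the Gaussian tail dominates, so this region contributes a term that vanishes as $R\uparrow\infty$ uniformly in small $\epsilon,\delta$. The regularity assertion in (HE) --- global H\"older control of spatial derivatives of $v$ up to second order --- then allows one to interchange the limits with differentiation and deduce $\xi^T D^2_x v^f(t,x)\,\xi\geq 0$. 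Since $\xi$ and $(t,x)$ were arbitrary, $v$ is convex.

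The main obstacle is this last step: justifying that the boundary layer introduced by the Gaussian cutoff does not spoil the positivity in the limit, and that the $\epsilon$-smoothing commutes with the second-derivative convexity hypothesis imposed on $\mbox{Tr}(AD^2 f_\epsilon)$. I would handle it in the same way as in the proof of the main comparison result (Theorem \ref{stroock}-based argument given just above), where precisely this bookkeeping --- sub-quadratic exponential growth of the data outpaced by Gaussian decay of the adjoint kernel, uniformly in the smoothing parameters --- makes the tail contribution negligible. Everything else, including pointwise convergence $f^R_{\epsilon,\delta}\to f$ and the fact that $f_\epsilon$ is automatically convex as a heat-kernel convolution of a convex function, is standard.
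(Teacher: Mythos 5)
Your proposal is correct and takes essentially the same route as the paper's own proof: both rest on the representation (\ref{cp0011}), the positivity of the adjoint kernel $p^*$, the nonnegativity of the Hessian of $\mbox{Tr}\left(AD^2f_{\epsilon}\right)$, and passage to the limits $\delta\downarrow 0$, $R\uparrow\infty$, $\epsilon\downarrow 0$, concluding from $v=v^f+f$ with $f$ convex. The additional bookkeeping you supply --- contracting with a direction $\xi$ and controlling the tail region $\{|y|>R\}$ by the Gaussian decay of $p^*$ against the sub-quadratic exponential growth allowed in (D$'$) --- only makes explicit what the paper leaves implicit when it asserts that the limit of (\ref{cp0011}) exists on both sides.
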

\begin{proof}
If the assumptions C and D' hold, then the limit $\delta \downarrow 0$ and $R\uparrow \infty$ of (\ref{cp0011}) exists on both 
sides of the equation for arbitrary small $\epsilon >0$. Since $p^*>0$  by assumptions C and D' and  $\mbox{Tr}\left(AD^2f_{\epsilon}\right)$ 
is convex for $\epsilon >0$ small, this limit of (\ref{cp0011})  shows that the value function $v^f_{\epsilon}$ is convex for small $\epsilon>0$. 
Finally, if $v^f$ is convex, then  $v$ is convex. 
\end{proof}
The latter criterion is sufficient and rather strong in the sense that it essentially says that the time derivative of the value functions has 
a nonegative Hessian. Let us assume that $f\in C^2$ for a moment. In order to have an iff-criterion, we need to consider the critical set $C_r$, 
where $C_r:=\{x|D^2f(x)=0\}$. If $x\in C_r$ and $f$ is convex then $D^2_{x_ix_j}f(x)\geq 0$ such that this $x\in C$ is a minimum for any 
partial second order derivative of $f$. It follows that $D^3_{x_ix_jx_k}f(x)=0$ and $D^4_{x_ix_jx_kx_l}f(x)\geq 0$.The latter very simple but 
effective observation about the behavior of regular convex functions at critical points is also made in \cite{T}. We can combine this observations 
with our observations so far. This means that the 
sufficient criterion of if criterion of Theorem \ref{crit1} becomes an 'iff'-criterion if we restrict the sufficient criterion to the critical set $C_r$. We have
\begin{cor}\label{crit1+}
Assume D' and HE hold for data and coefficients. Then the solution function $v$ of (\ref{cp00}) is convex iff 
\begin{equation}
\forall x\in C_r~ \mbox{Tr}\left(AD^2f\right)~\mbox{is convex in}~ U,
\end{equation} 
where $U$ is a local neighborhood of $x$, or if for all $x\in C_r$ the Hessian is a nonnegative $\left(D^2_{x_ix_j}\mbox{Tr}
\left(AD^2f_{\epsilon}\right)(x)\right)\geq 0$ for any small $\epsilon >0$.
\end{cor}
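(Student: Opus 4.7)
The starting point is the representation (\ref{cp0011}) for the Hessian of $v^{f,R}_{\epsilon,\delta}$, together with the decomposition $v = f + v^f$. Since $f$ is convex, $v$ is convex exactly when $D^2 v = D^2 f + D^2 v^f \geq 0$. As in Corollary \ref{crit1+} I first regularize $f$ to $f_\epsilon = f \ast q^\epsilon$ so that pointwise Hessians exist, then pass to the limit $\delta\downarrow 0$, $R\uparrow\infty$, $\epsilon \downarrow 0$ using the assumptions D$'$ and HE together with the upper bounds of Theorem \ref{stroock}. The key structural input is the observation, recalled from \cite{T}, that at any $x\in C_r$ every second directional derivative $D^2_{hh}f$ attains its minimum value $0$, so $D^3 f(x)=0$ and the Hessian $D^4_{\cdot\cdot hh}f(x)$ is nonnegative definite. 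This collapses many cross terms in $D^2\mathrm{Tr}(AD^2f)$ at points of $C_r$.

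\textbf{Sufficiency.} Split ${\mathbb R}^n$ into $C_r$ and its complement. On compact subsets of ${\mathbb R}^n\setminus C_r$ the Hessian $D^2 f_\epsilon$ is bounded below by a strictly positive matrix, and since $v^f(t,\cdot) = O(t)$ in $C^2$-norm on compacts (by (\ref{cp001+}) and Theorem \ref{stroock}), we get $D^2 v(t,x) \geq 0$ for all $t$ in a short initial interval. Near a critical point $x\in C_r$ apply (\ref{cp0011}): on the neighborhood $U$ where $\mathrm{Tr}(AD^2 f_\epsilon)$ is assumed convex the integrand $D^2_{y_iy_j}\mathrm{Tr}(AD^2 f_\epsilon)$ is nonnegative definite, while the adjoint kernel $p^*(t,x;s,y)$ concentrates on $|y-x|\lesssim\sqrt{t-s}$ by the Gaussian bounds in (\ref{pxest}), so the contribution from ${\mathbb R}^n\setminus U$ is of higher order in $t$. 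Combined with $D^2 f(x)=0$ this yields $D^2 v(t,x)\geq 0$ for small $t$, and a standard propagation argument (applying the same representation to the shifted initial time, or a maximum principle for the PDE satisfied by $D^2_{hh}v$) extends nonnegativity to all $t\in (0,T]$.

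\textbf{Necessity.} Assume $v$ convex. Fix $x\in C_r$ and a direction $h\in{\mathbb R}^n$, and set $\phi(t):=D^2_{hh}v(t,x)$. Then $\phi(0)=D^2_{hh}f(x)=0$ and $\phi(t)\geq 0$ for $t>0$, hence $\phi'(0)\geq 0$. Differentiating the PDE (\ref{cp00}) twice in $h$ and evaluating at $t=0$ gives
\begin{equation}
\phi'(0) \;=\; D^2_{hh}\mathrm{Tr}\bigl(AD^2 f\bigr)(x).
\end{equation}
At $x\in C_r$ the vanishing of $D^2 f(x)$ and $D^3 f(x)$ makes all terms involving derivatives of the coefficients drop out, so this reduces to the contraction $\sum_{ij} a_{ij}(x)\, D^4_{ijhh}f(x)$, which is the Hessian direction of $\mathrm{Tr}(AD^2 f_\epsilon)$ at $x$ after $\epsilon\downarrow 0$. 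Since $h$ was arbitrary, $D^2\mathrm{Tr}(AD^2 f)(x)\geq 0$ at every $x\in C_r$, which combined with the automatic nonnegativity off $C_r$ (where $D^2 f>0$ makes the criterion of Corollary \ref{crit1+} vacuous) yields the claimed local convexity on neighborhoods of critical points.

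\textbf{Main obstacle.} The delicate step is the concentration--and--propagation argument in the sufficiency direction: one must simultaneously control the tail of $p^*$ outside $U$ (where no sign on $D^2\mathrm{Tr}(AD^2 f_\epsilon)$ is available) and propagate the small-time nonnegativity to all $t\in(0,T]$ under only the HE assumption, which gives upper bounds but not the kind of pointwise lower bounds that would make a direct comparison argument available. The remaining technical item is to justify the $\epsilon\downarrow 0$ limit when $C_r$ may be an irregular closed set, handled by Alexandrov's theorem plus the fact that convexity of $\mathrm{Tr}(AD^2 f_\epsilon)$ is preserved in the limit on the interior of each $U$.
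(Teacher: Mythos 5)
Your necessity computation ($\phi(t)=D^2_{hh}v(t,x)$, $\phi(0)=0$, $\phi(t)\geq 0$, hence $\phi'(0)=D^2_{hh}\mathrm{Tr}(AD^2f)(x)\geq 0$, using that $D^2f(x)=0$ and $D^3f(x)=0$ kill the terms with derivatives of $a_{ij}$) is a reasonable elementary substitute for what the paper does, namely invoking the critical--point observation ($D^3f=0$, $D^4f\geq 0$ on $C_r$) together with the characterization of convexity preservation in \cite{T}; note, though, that it only yields pointwise nonnegativity of the Hessian of $\mathrm{Tr}(AD^2f)$ at points of $C_r$, i.e.\ the second disjunct of the criterion, and your closing claim that it ``yields the claimed local convexity on neighborhoods of critical points'' overstates this: convexity of $\mathrm{Tr}(AD^2f)$ on a full neighborhood $U$ involves points off $C_r$, where nothing is ``vacuous''. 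It also presupposes that $D^2_{hh}v$ is continuous up to $t=0$ and right-differentiable in $t$, which under (HE) and merely Alexandrov-regular $f$ requires the $f_\epsilon$-regularization and an interchange of limits that you do not justify (the critical sets of $f_\epsilon$ and $f$ need not coincide).

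The genuine gap is the one you yourself flag: the sufficiency direction. The small-time argument (strict positivity of $D^2f_\epsilon$ on compacta off $C_r$ plus Gaussian concentration of $p^*$ near $C_r$) does not close, for three concrete reasons. First, ${\mathbb R}^n\setminus C_r$ is not compact and the lower bound on $D^2f$ degenerates near $\partial C_r$, so in the transition region the uncontrolled tail of the integral in (\ref{cp0011}) over ${\mathbb R}^n\setminus U$ is not dominated uniformly even for small $t$. Second, the ``restart at $t_0$'' propagation is circular: the hypothesis of the corollary is a condition on the initial datum $f$, and there is no reason the new datum $v(t_0,\cdot)$ satisfies the same criterion at \emph{its} critical points, which is exactly what re-applying the representation would require. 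Third, the maximum-principle alternative does not work as stated: differentiating (\ref{cp00}) twice in a fixed direction $h$ produces an equation for $D^2_{hh}v$ containing third-order derivatives of $v$ multiplied by $\partial a_{ij}$, terms with no sign, and the quantity that is actually propagated is the minimal eigenvalue of $D^2v$ (a nonsmooth, viscosity-type object), so no direct comparison argument is available under (HE), which supplies only upper bounds on the densities. This hard direction is precisely what the paper does not re-prove either: its route is to restrict the representation-formula criterion of Corollary \ref{crit1} (positivity of $p^*$ via Green's identity and the adjoint) to $C_r$ using the critical-point observation, and to import the equivalence from \cite{T}, generalized from condition C with polynomial data bounds to (HE) and (D'). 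As a self-contained proof your proposal is therefore incomplete at its central step; to repair it you would either have to reproduce a Janson--Tysk-type argument under (HE) or, as the paper does, cite it.
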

This criterion is sufficient and necessary, and we can use it in order to obtain a partially different proof of comparison for univariate data. 
Note, however, that the main idea of connecting  Green's identity  for variable coefficients with properties of the relations between (derivatives) 
the fundamental solution and (derivatives) of its adjoint.
 We have
\begin{cor}
Assume D' and HE hold for data and coefficients. Then the local convexity criterion of Corollary \ref{crit1+} holds.
\end{cor}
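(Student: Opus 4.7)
The plan is to verify the local convexity criterion of Corollary \ref{crit1+} directly in the univariate case, by computing the Hessian of $\mbox{Tr}(AD^2 f_{\epsilon})$ on the critical set $C_r$ and showing that it is positive semidefinite there. This then feeds into the iff-criterion and yields convexity of $v$.

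First, I would exploit the univariate structure of the data: for $f(x)=h_1(x_1)$ convex and satisfying (D'), the smooth mollification $h_{1,\epsilon}=h_1\ast_{sp}G_{\epsilon}$ stays convex, and $D^2 f_{\epsilon}$ has a single non-zero entry, namely $h''_{1,\epsilon}(x_1)$ at position $(1,1)$. Consequently
\begin{equation*}
\mbox{Tr}\bigl(A\,D^2 f_{\epsilon}\bigr)(x) \;=\; a_{11}(x)\,h''_{1,\epsilon}(x_1),
\end{equation*}
matching the simplified representation already appearing in (\ref{cp0012}).

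Next, I would identify the critical set in the univariate setting: $x\in C_r$ iff $h''_{1,\epsilon}(x_1)=0$. Since $h''_{1,\epsilon}\geq 0$ and $h_{1,\epsilon}\in C^{\infty}$, any such $x_1$ is a global minimum of $h''_{1,\epsilon}$, and hence
\begin{equation*}
h'''_{1,\epsilon}(x_1)=0,\qquad h''''_{1,\epsilon}(x_1)\geq 0.
\end{equation*}
This is exactly the elementary observation about the behaviour of regular convex functions on their critical sets which is recalled just before the statement of Corollary \ref{crit1+}. Applying Leibniz to the scalar product $a_{11}(x)\,h''_{1,\epsilon}(x_1)$ and evaluating at $x\in C_r$, every term carrying a factor $h''_{1,\epsilon}(x_1)$ or $h'''_{1,\epsilon}(x_1)$ vanishes, so only the purely $x_1$-directional double derivative of $h''_{1,\epsilon}$ survives:
\begin{equation*}
D^2_{x_i x_j}\bigl[a_{11}(x)\,h''_{1,\epsilon}(x_1)\bigr]\Big|_{x\in C_r} \;=\; a_{11}(x)\,h''''_{1,\epsilon}(x_1)\,\delta_{i1}\delta_{j1}.
\end{equation*}
Since $a_{11}(x)=(\sigma\sigma^T)_{11}(x)\geq 0$ and $h''''_{1,\epsilon}(x_1)\geq 0$, this matrix is positive semidefinite at every point of $C_r$, which is exactly the condition required by Corollary \ref{crit1+}.

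The main obstacle I anticipate is not the Leibniz computation itself, but rather ensuring that the $\epsilon\downarrow 0$ passage in (\ref{cp0012}) is compatible with the growth condition (D') and the Kusuoka--Stroock a priori bounds that come with (HE). Here one needs the pointwise convergence $h_{1,\epsilon}\to h_1$ and $h''_{1,\epsilon}\to h''_1$ (in a weak sense on the support of convex Hessian measures) to be dominated by integrable majorants against $p^{*}$; the sub-Gaussian upper bound in (D') combined with the Gaussian decay in Theorem \ref{stroock} supplies this, along the same lines as in the preceding martingale argument. Once that approximation is controlled, the criterion verified pointwise on $C_r$ transfers from $f_{\epsilon}$ to $f$, and convexity of $v$ follows.
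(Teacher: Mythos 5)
Your argument is essentially the paper's own proof: reduce $\mbox{Tr}(AD^2f)$ to $a_{11}h''_1$, observe that on the critical set the convex second derivative attains a minimum so the third derivative vanishes and the fourth is nonnegative, and conclude via Leibniz that the Hessian of $a_{11}h''_1$ there reduces to the positive semidefinite term $a_{11}h''''_1\,\delta_{i1}\delta_{j1}$, with the $\epsilon,\delta,R$ limit of the representation (\ref{cp0012}) handled by (D') and the Kusuoka--Stroock bounds. Your use of the mollified $h_{1,\epsilon}$ throughout is if anything slightly cleaner than the paper's statement, which nominally assumes only $h_1\in C^2$ while invoking fourth derivatives, but the route is the same.
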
 
\begin{proof}
We assume that $h_1\in C^2$ and that the assumptions  D' and HE hold. Then the limit with $\delta,\epsilon \downarrow 0$ and $R\uparrow \infty$ 
of (\ref{cp0012}) holds  and we have the representation
 \begin{equation}\label{cp0014}
\begin{array}{ll}
\frac{\partial^2 v^{h_1}}{\partial x_i\partial x_j}(t,x)=\int_0^t\int_{{\mathbb R}^n}\left(D^{2}_{y_iy_j}\left(a_{11}\frac{d^2 h_{1}}{d x_1^2}\right)
(s,y)\right)p^*(t,x;s,y)dyds.
\end{array}
\end{equation}
Consider the critical set $C_{h_1}:=\{x|D^2h_1(x_1)=0\}$. If $x\in C_{h_1}$ and $h_1\in C^2$ is convex, then $D^2_{x_ix_j}h_1(x_1)\geq 0$ such that 
this $x\in C_{h_1}$ is a minimum for any partial second order derivative of $h_1$. It follows that $D^3_{x_ix_jx_k}h_1(x_1)=0$ and $D^4_{x_ix_jx_kx_l}
h_1(x_1)\geq 0$, where only $D^4_{x_1x_1x_1x_1}h_1(x_1)\geq 0$ may be different from zero for $x\in C_{h_1}$. We get
\begin{equation}
\forall x\in C_{h_1}~\left(D^{2}_{x_ix_j}\left(a_{11}\frac{d^2 h_{1}}{d x_1^2}\right)(s,x)\right)=a_{11}(s,x)\delta_{1i}\delta_{1j}\frac{\partial^2}
{\partial x_i\partial x_j}\frac{d^2 h_{1}}{d x_1^2}\geq 0
\end{equation}
and the local convexity criterion is satisfied. Comparison follows. 
\end{proof}
Essentially, in the latter corollary we have derived a generalization of the locally convexity preserving condition in \cite{T} where a uniform 
ellipticity condition in \cite{T} is generalized to the condition (HE), and the polynomial upper bound in \cite{T} is generalized to an 
exponential upper bound in D'. Recall this condition. Let $L=:\frac{\partial}{\partial t}-L_{sp}$ with $L$ as in (\ref{cp00}). Assume that HE and D'. 
For regular functions $f:{\mathbb R}^n\rightarrow {\mathbb R}$ and arbitrary 'directions' $u\in {\mathbb R}^n$ define as usual
\begin{equation}
D_uf(x)=\lim_{h\downarrow 0}\frac{f(x+hu)-f(x)}{h},~D_{uu}f(x)=\lim_{h\downarrow 0}\frac{D_uf(x+hu)-D_uf(x)}{h}.
\end{equation}
Then the condition in Corollary \ref{crit1+} can be rephrased by saying that under the condition (D') and (HE) the operator $L$ is called locally convexity preserving at
 $x\in {\mathbb R}^n$  if there exists a neighborhood $U=U(x)$ of $x$ in ${\mathbb R}^n$ with respect to the standard topology such that 
\begin{equation}
\forall u\in {\mathbb R}^n~\forall \mbox{convex }f\in C^2(U)~\left(D_{uu}f(x)=0\Rightarrow D_{uu}\left(L_{sp}f\right)(x)\geq 0\right)
\end{equation}
This is a reformulation of the condition in Corollary \ref{crit1+}  is stated in \cite{T} under the  restricted condition C, and where  a more 
restrictive condition than D or D' was imposed on the data, i.e., in \cite{T} it is assumed that the data have a polynomial upper bound.
Under these restricted conditions equivalent conditions for convexity preservation can be obtained (cf. \cite{T}). We mention that these 
criteria can be generalized to the condition (HE), but the data condition of a polynomially upper bound is essentially used.
However this is enough in order to assert that the local convexity condition is violated for multivariate data $f$ in general a fortiori.
\begin{cor}
Assume D' and HE hold for data and coefficients.Then for all nonconstant coefficients there are data such that convexity of the value function is locally violated.
\end{cor}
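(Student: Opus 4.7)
The plan is to apply Corollary~\ref{crit1+} contrapositively: for an arbitrary nonconstant coefficient matrix $A = (a_{ij})$, I will exhibit convex multivariate test data for which the local convexity criterion cannot hold at some critical point, thereby forcing local violation of convexity of the value function.

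The test data I would use is the family
\begin{equation*}
f_{u,a}(x) = \tfrac{1}{12}(u\cdot (x-a))^4, \qquad u,a \in \mathbb{R}^n,
\end{equation*}
each of which is convex, satisfies (D'), and has rank-one Hessian $D^2 f_{u,a}(x) = (u\cdot (x-a))^2 \, u u^T$ vanishing exactly on the affine hyperplane $H_{u,a}=\{u\cdot(x-a)=0\}$; this hyperplane is the critical set $C_r$ for $f_{u,a}$. A direct computation gives $\mathrm{Tr}(A D^2 f_{u,a})(x) = (u\cdot(x-a))^2 g_u(x)$ with $g_u(x) := u^T A(x) u$. Evaluating the Hessian of this product against a direction $v \perp u$ at a point near $H_{u,a}$ yields, to leading order, $(u\cdot (x-a))^2 \, v^T D^2 g_u(x) v$: the middle cross term $4(u\cdot(x-a))(u\cdot v)(\nabla g_u\cdot v)$ drops out because $u\cdot v=0$, and the leading $2g_u(u\cdot v)^2$ term also vanishes. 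If the criterion of Corollary~\ref{crit1+} held for every $f_{u,a}$, one would therefore obtain the pointwise inequality $v^T D^2 g_u(x) v \geq 0$ for every $x \in \mathbb{R}^n$, every $u$, and every $v\perp u$.

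I would then argue that this forces $A$ to be constant, contradicting the hypothesis. The scalar $g_u$ is bounded (by (HE), since the $a_{ij}$ lie in $C^{2,\alpha}_b$) and, under the inequality above, convex on every affine hyperplane parallel to $u^\perp$; a bounded convex function on $\mathbb{R}^{n-1}$ is constant, so $g_u(x)$ depends only on $u\cdot x$. Specializing $u = e_i$ shows each $a_{ii}$ depends only on $x_i$. Specializing $u = e_i + \varepsilon e_j$ and matching powers of $\varepsilon$ in the resulting identity $u^T A(x) u = h_{u}(u\cdot x)$, then differentiating in $x_i$ and $x_j$ and equating the two expressions for $h_u'$, produces separation identities $a_{ii}'(x_i) = 2\partial_j a_{ij}$ and $a_{jj}'(x_j) = 2\partial_i a_{ij}$; combined once more with boundedness, these force each $a_{ii}$ and each $a_{ij}$ to be constant. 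Hence $A$ is constant, contrary to assumption.

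The main obstacle is the final algebraic step of extracting constancy of $A$ from the $g_u$-convexity conditions: one must juggle several choices of $u$ and repeatedly invoke the "bounded convex implies constant" principle. A secondary subtlety is that the criterion of Corollary~\ref{crit1+} is a \emph{neighborhood} condition, not a pointwise one on $C_r$; this is precisely what forces $v^T D^2 g_u \geq 0$ throughout $\mathbb{R}^n$ rather than only on the critical hyperplane, and it is essential to the whole argument.
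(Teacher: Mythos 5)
Your strategy is genuinely different from the paper's: the paper proves this corollary by invoking the result of \cite{T} for polynomially bounded data under the uniform ellipticity condition (C) and then passing to (HE)/(D') by an approximation and limit argument, whereas you attempt a self-contained rigidity proof from the convexity criterion. That is a legitimate plan, but as written it has a real gap at its decisive step. First, a repairable point of logic: Corollary \ref{crit1+} is stated as a disjunction, and its defensible content is the pointwise, direction-matched locally-convexity-preserving condition that the paper itself states afterwards; on $H_{u,a}$ the full Hessian of your quartic vanishes, so every direction is degenerate there and the pointwise condition is vacuous, while the neighborhood disjunct you call ``essential'' is the least justified reading. Fortunately the inequality $v^TD^2g_uv\ge 0$ for $v\perp u$ is still available, and more simply: apply the pointwise condition at points \emph{off} the hyperplane, where $D^2f_{u,a}=\phi^2uu^T$ is degenerate exactly in the directions $v\perp u$, giving $\phi^2\,v^TD^2g_uv\ge0$; indeed the quadratic ridge $\tfrac12(u\cdot(x-a))^2$ already suffices, and no neighborhood or mollification argument is needed.

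The genuine gap is the final rigidity step. ``Bounded convex implies constant'' requires boundedness of $g_u=u^TA u$, and (HE) does not provide it: the $C^{2,\alpha}$ requirement in (HE) concerns the solution of the Cauchy problem, not the coefficients, and the paper's H\"ormander framework explicitly allows the vector-field coefficients $v_{ji}$ to have linear growth, so the $a_{ij}$ may grow quadratically. Without boundedness your implication fails outright: for example $A(x)=\tfrac12\,\mathrm{diag}(1+x_2^2,\,1)$ arises from linear-growth H\"ormander fields ($V_1=\partial_{x_1}$, $V_2=x_2\partial_{x_1}$, $V_3=\partial_{x_2}$), is nonconstant, and yet satisfies $v^TD^2(u^TAu)v\ge0$ for every $u$ and every $v$, so the necessary condition you extract cannot distinguish it from a constant matrix, and your subsequent separation identities never get off the ground. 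Consequently your argument establishes the corollary only for bounded coefficients (condition (C)), not under the stated hypotheses (D') and (HE); to cover (HE) you would need either stronger necessary conditions than $v^TD^2g_uv\ge0$ (the cross-term conditions involving first derivatives of the $a_{ij}$ that \cite{T} extracts with richer test data), or an approximation step reducing the (HE) case to the bounded, uniformly elliptic one --- which is precisely how the paper's own proof proceeds.
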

\begin{proof}
For polynomially bounded data and the condition C the proof in \cite{T} applies. If the conditions HE and D hold, then a solution can be 
represented by limits of solution functions of parabolic problems which satisfy C and have polynomial bounded data, where the convexity is locally violated.
The violation of convexity is then preserved in the limit.  
\end{proof}
\begin{rem}
Note that we consider the essential case of time-homogeneous models in this paper. Convexity criteria are satisfied for purely time dependent 
coefficients for analogous extensions of condition C, of course.
\end{rem}

\section{Applications to finance I: Representations of  Greeks and beyond  power options}

Consider functions $u,v$ with $Lv=0$ and $L^*u=0$ respectively, where for for fixed $x+h,y+h$ 
\begin{equation}
\begin{array}{ll}
v(\sigma,z)=p(\sigma,z;s,y+h),~u(\sigma,z)=p^*(\sigma,z;t,x+h),~ t>s.
\end{array}
\end{equation}
 Integrating equation (\ref{addiff0}) over $[s+\epsilon,t-\epsilon]\times B_R$ and using $Lv=0$ and $L^*u=0$  we observed that the left side of (\ref{addiff0}) 
 becomes zero, and that the right side of the resulting equation
\begin{equation}\label{addiff1g}
\begin{array}{ll}
\int_{B_R}u(t-\epsilon,z)v(t-\epsilon,z)-u(s+\epsilon,z)v(s+\epsilon,z)dz\\
\\
=\int_{s+\epsilon}^{t-\epsilon}\int_{\partial B_R}(\sum_{i=1}^n\frac{\partial}{\partial x_i}\left[\sum_{j=1}^n
\left(u a_{ij}\frac{\partial v}{\partial x_i}-va_{ij}\frac{\partial u}{\partial x_j}-uv\frac{\partial a_{ij}}{\partial x_j}\right)\right](\sigma,z)dSd\sigma,
\end{array}
\end{equation} 
converges to zero as  $R\uparrow \infty$. We then have
\begin{equation}\label{addiff2g}
\begin{array}{ll}
\int_{{\mathbb R}^n}u(t-\epsilon,z)v(t-\epsilon,z)dz=\int_{{\mathbb R}^n}u(s+\epsilon,z)v(s+\epsilon,z)dz,\\
\\
\mbox{which is}~\\
\\
\int_{{\mathbb R}^n}v(t-\epsilon,z)p^*(t-\epsilon,z;t,x+h)dz=\int_{{\mathbb R}^n}u(s+\epsilon,z)p(s+\epsilon,z;s,y+h)dz\\
\\
\mbox{or}\\
\\
\int_{{\mathbb R}^n}p(t-\epsilon,z;s,y)p^*(t-\epsilon,z;t,x+h)dz\\
\\
=\int_{{\mathbb R}^n}p^*(s+\epsilon,z;t,x)p(s+\epsilon,z;s,y+h)dz
\end{array}
\end{equation} 
In the limit $\epsilon \downarrow 0$ the integrand on the left side  contributes only for $z=x+h$ and the integrand on the right side contributes 
only for $z=y+h$ such that indeed
\begin{equation}
v(t,x+h)=u(s,y+h),
\end{equation}
where $h\in {\mathbb R}^n$ was free fixed choice.

 Hence, the relation holds for finite difference quotients
\begin{equation}
D^{+}_hu(s,y)=\frac{u(s,y+h)-u(s,y)}{h}, D^{-}_hu(s,y)=\frac{u(s,y)-u(s,y-h)}{h}.
\end{equation}  
Similarly for higher order finite differences.
Hence for $h\downarrow 0$ and any multiindex $\alpha$ we get indeed
\begin{equation}\label{pp*alpha}
 D^{\alpha}_xv(t,x)= D^{\alpha}_yu(s,y).
\end{equation}
In the notation above with $h=0$ we have
\begin{equation}
\begin{array}{ll}
v(t,x)=p(t,x;s,y),~u(s,y)=p^*(\sigma,z;t,x),~ t>s.
\end{array}
\end{equation}
Such a formula can then be used in partial integration and for the representations of Greeks. For a Cauchy  problem  of a second order linear equation 
with fundamental solution $p$ and initial data $f$ at time $t_0$
we have the representation
\begin{equation}
v^f(t,x)=\int_{{\mathbb R}^n}f(y)p(t,x;t_0,y)dy=\int_{{\mathbb R}^n}f(y)p^*(t_0,y;t,x)dy
\end{equation}
For spatial derivatives we get the representation (\ref{pp*alpha})
\begin{equation}
v^f(t,x)=\int_{{\mathbb R}^n}f(y)D^{\alpha}_xp(t,x;t_0,y)dy=\int_{{\mathbb R}^n}f(y)D^{\alpha}_yp^*(t_0,y;t,x)dy
\end{equation}
If $f\in H^2\cap C^2$, then the derivatives can be shifted to the payoff functions.
Next we consider a application concerning growth conditions of initial data for problems written in lognormal coordinates, where we observe 
that we can improve on the data assumption (D) if some boundary conditions are satisfied.
In finance,  diffusions of second order are usually considered in lognormal coordinates $x_i=\ln(s_i)$ or $s_i=\exp(x_i)$ 
on the domain $D^s=[0,T]\times {\mathbb R}_+^n$, where $T>0$ is arbitrarily large and and where ${\mathbb R}^n_+$  denotes the set of 
positive real numbers. For the transformed value functions we have
\begin{equation}
v^s(t,s):=v(t,x),~\frac{\partial v^s}{\partial s_i}=\frac{\partial v}{\partial x_i}\frac{dx_i}{ds_i}=\frac{\partial v}{\partial x_i}
\frac{1}{s_i},~s_is_j\frac{\partial^2 v^s}{\partial s_i\partial s_j}=\frac{\partial^2 v}{\partial x_i\partial x_j},
\end{equation}
and the Cauchy problem in (\ref{cp00}) becomes a Cauchy problem on the domain $D^s$ of the form
\begin{equation}\label{cp00s}
\left\lbrace \begin{array}{ll}
L^sv\equiv v^s_{t}-\sum_{i,j=1}^na^s_{ij}s_is_j\frac{\partial^2 v^s}{\partial s_i\partial s_j}=0,\\
\\
v^s(0,s)=f^s(s),
\end{array}\right.
\end{equation}
where we assume univariate data
\begin{equation}\label{h1s}
f^s(s):=f(x)=h^s(s_1):=h(x_1),
\end{equation}
and where
\begin{equation}
a^s_{ij}(s):=a_{ij}(x),
\end{equation}
again after appropriate renumeration of the components of  $s=(s_1,\cdots,s_n)^T$ and $x=(x_1,\cdots,x_n)^T$ respectively.  Here we 
mention that option value problems are often given in the form of a final value problem which can be obtained from (\ref{cp00s}) by a 
time transformation $\tau=T-t$. Here, we stick with Cauchy problem formulation for convenience. Note that the constant $c>0$ in the assumption 
D above is arbitrary such that power options,  i.e., options with payoff $h^s(s_1)=s_1 ^m$ for some integer $m$, 
 are subsumed. However,e if 
we have natural zero boundary conditions of the value functions at $s_i=0$ for all $i$ then we can weaken the data condition to 
\begin{itemize}
\begin{item}[($D^s$)] For a finite constant $c>0$ the convex univariate data $f$ satisfy for some small $\epsilon >0$ and all $s_1\in {\mathbb R}_+$
\begin{equation}
|f^s(s_1)|\leq c\exp\left(c|s_1|^{2-\epsilon} \right) .
\end{equation}
\end{item}
\end{itemize}
In order to observe that this generalization is possible note first that the adjoint equation is
\begin{equation}\label{adjoint00s}
 \begin{array}{ll}
L^*_su\equiv u^s_{t}+\sum_{i,j=1}s_is_j\frac{\partial^2}{\partial s_i\partial s_j}(a^s_{ij}u^s)=0.
\end{array}
\end{equation}
We may denote the fundamental solution of (\ref{adjoint00}) by $p^*_s$. Then for  regular functions $u^s,v^s\in C^{1,2}$ on $[0,T]\times 
{\mathbb R}^n_+$ we have Green's identity for variable coefficients
\begin{equation}\label{addiff0s}
\begin{array}{ll}
v^sL^*u^s-u^sLv^s=(u^sv^s)_t\\
\\
-\sum_{i=1}^ns_i\frac{\partial}{\partial s_i}\left[\sum_{j=1}^n
\left(u^s a^s_{ij}s_i\frac{\partial v^s}{\partial s_i}-v^sa^s_{ij}\frac{\partial u^s}{\partial s_j}-u^sv^ss_j\frac{\partial a^s_{ij}}{\partial s_j}\right)\right],
\end{array}
\end{equation} 
and for fundamental solutions  $u^s,v^s$ to $L_sv^s=0$ and $L^*_su^s=0$ we may integrate this identity 
over $s+\epsilon<t-\epsilon$ with $\epsilon >0$ as before, and over  $B^+_R:=\{s\in {\mathbb R}^n_+||s|\leq R\}$ for large  $R$. 

\section{Applications to finance II: Passport options, symmetric passport options, and optimal strategies }

Options on a traded account have been widely studied in the previous literature. In the simplest setup, the client 
is free to trade in two underlying assets subject to specific contractual limits. At the time of the maturity, he
can keep the profits from this trading strategy while his losses are forgiven. Some of the contracts within this family, like
passport options, have been even actively traded. However, the popularity of such contracts has been rather small. 
For univariate passport options   passport options can be subsumed by lookback options. However, this does not hold
for multivariate passport options, as optimal strategies have a much more complicated structure, a consequence, which
 we draw below from the comparison theorem above. Secondly, mathematical optimal strategies, e.g. for a 
passport call written on one share imply high frequency trading between short and long limit positions. For multivariate
passport options this statement has to be  modified according to the correlation structure of the underlying assets, as we
observe below. High frequency trading between short and long limit positions may be related may be also related
to the opinion that passport options are expensive. But up to possible costs for handling the transactions passport options, the 
costs are just replication costs. Extreme short positions may also be unpopular to such an extent that they may be
restricted by law from time to time.  Another reason may be the fact that the previously considered contracts
 treat the two underlying assets asymmetrically. In this traditional setup, the restriction of the trading position is set to only one asset and the residual wealth
is invested in the second asset. For the passport option, the restriction on the position in the first asset is $[-1,1]$, 
meaning that the agent can take any position between long and short. The position in the second asset is given by the
residual wealth.\\

Passport options were introduced in \cite{HLP}.
The authors derived the optimal strategy in the 
geometric Brownian motion model, which is achieved by a short position when the traded account is negative and a long position 
when the traded account is positive. They also found the corresponding option value by solving the corresponding pricing
partial differential equation. Henderson and Hobson (2000) showed that the same strategy remains optimal in the presence 
of stochastic volatility. Shreve and Vecer (2000) considered more general trading
limits on the first asset. The optimality of the solution was proved using the probabilistic arguments based on a 
comparison theorem of Hajek (1985). Vecer (2001) later showed that Asian options are special cases of options on a
traded account when the restriction on the first asset has a specific deterministic form and found a novel pricing partial differential
equation. Delbaen and Yor (2002) showed that the strategy for the passport option remains optimal when the portfolio rebalancing is
restricted to a discrete time. Kampen (2008) considered  multivariate passport options, where the traded account consists of more than one asset,
and observed that  optimal strategies may depend on the sign of the correlations between assets. However a full determination of optimal strategies 
for multivariate passport options was not given in this note. We shall do this below. Then in a second step we consider a basic example of a symmetric passport 
option with just one share and one money account. We shall discuss the advantages of this idea. We also find interesting optimal strategies very different from
optimal strategies of classical passport options which illustrate the practicability of this new product.\\

First let us extend the results known so far for multivariate passport options. First recall the structure of the product itself.
Given a trading account
\begin{equation}
\Pi=\Pi_{\Delta}=\sum_{i=1}^n \Delta_i S_i,~\mbox{where}~dS_i=\sigma_iS_idW_i,~S(0)=x\in {\mathbb R}^n
\end{equation} 
with $n$ lognormal processes $\left(S_i \right)_{1\leq i\leq n}$, where correlations of Brownian motions $W_i$ are encoded in
$(\rho_{ij})_{1\leq i,j\leq n}$, and $q_i\in [-1,1]$ are bounded trading positions, the price of a classical passport option is given by the the solution 
of an optimal control problem
\begin{equation}
\sup_{-1\leq \Delta_i\leq 1,~ 1\leq i\leq n}E^{x,p}\left(f(\Pi_{\Delta}) \right),~f~\mbox{convex, exponentially bounded},
\end{equation}
for the trading positions $\Delta_i\in [-1,1]$, and where $p$ indicates the initial value of the portfolio variable. Actually, we may assume 
that the volatilities are functions of the assets as long as the regularity assumptions
on the coefficients above in (C) are satisfied. As we shall observe below, the comparison result above then implies that an optimal
strategy maximizes  the basket volatility, i.e.,
\begin{equation}
\sup_{-1\leq \Delta_i\leq 1,~ 1\leq i\leq n}\frac{\sqrt{\sum_{i,j=1}^n\rho_{ij}\Delta_i\Delta_j\sigma_i\sigma_jS_iS_j}}{\sum_{i=1}^nS_i}.
\end{equation}
Hence, signs of correlations (and space-time dependence of the signs of correlations) can change an optimal strategy essentially. This indicates 
also that  multivariate mean comparison results are significant extensions of univariate results. However, the mathematically determined optimal 
strategies are strategies which switch between maximal short and long positions with high frequency such that standard passport option are considered to be expensive. 
Let us go deeper into this result and draw some new consequences. Note that the strategy processes $\Delta$ define a  family of value functions
\begin{equation}
 v^{\delta}(t,s,p) := \E[(\Pi_{\Delta}(T) )^+|S(t) = s, \Pi(t)=p].
\end{equation} 
which satisfy a pure diffusion equations. These value functions can be compared for regular volatility matrices, i.e. regular strategies
$\delta=(\delta_1,\cdots,\delta_n)$  especially, according to the comparison result above.
 For simplicity of notation,  we rewrite the volatility matrix of the underlying assets in the form
\begin{equation}
(\sigma\sigma^T)(\delta):=\left(\delta_i\sigma_iS_i\rho_{ij}\delta_j\sigma_jS_j\right).
\end{equation}
Components of this matrix may be denoted by $(\sigma\sigma^T)_{ij}(\delta)$. Let $\Sigma\Sigma^T(\delta)$ the volatility
matrix where $\sigma\sigma^T(\delta)$ is augmented by the basket volatility term on the diagonal (corresponding to the quadratic variation of $\Pi$) and by the correlation term related to  the correlations of the portfolio variable and the underlyings.
For time to expiration $\tau=T-t$ the passport option value function $v^p$  satisfies the HJB-equation
\begin{equation}\label{hjbclp}
\begin{array}{ll}
\frac{\partial v^p}{\partial \tau}-\sup_{-1\leq \delta_i\leq1, 1\leq i\leq n}Tr\left(\Sigma\Sigma^T(\delta)D^2v^p\right)=0,
\end{array}
\end{equation}
which has to be  solved along with the initial condition $v^p(0,s,p)=(p-K)^+$. Here in the expression $ -1\leq \delta_i\leq1$, $\delta_i$
refers to the value of a strategy function also denoted by $\delta_i$ for simplicity of notation.
Here, $ D^2v^p$ is the Hessian with respect to the variables $(p,s)$.
The supremum of the volatility matrix over the set of regular strategies is outside the regular control space  required (say $C^3_b$) by  
the comparison result in general, but comparison over regular control spaces leads to the monotonicity condition
\begin{equation}\label{movp}
\delta,\delta'\in C^3_b,~(\sigma\sigma^T)(\delta)<(\sigma\sigma^T)(\delta')\Rightarrow v^{\delta}<v^{\delta'}~\mbox{on}~ (0,T]\times{\mathbb R}^n_+
\end{equation}
Here, note that we have expiry at $\tau=0$ which corresponds to $t=T$. The limit or optimal strategy is  a function which is just measurable in general, 
and the order of matrices means that the difference $(\sigma\sigma^T)(\delta')-(\sigma\sigma^T)(\delta)$ is positive definite. \\

Next let us go deeper into the question of existence of global solutions to the HJB-equations. First we have to remark that we cannot expect the 
existence of classical solutions, i.e., solutions which exist in $C^{1,2}$. Standard estimates for classical solutions of second order HJB-Cauchy 
problems even require data in $C^3$ (cf. \cite{Kryl1} and \cite{Kryl2}) in contrast to the Lipschitz-continuous data usually given in finance theory. 
However,  as we have only measurable coefficients in the case of classical passport options, there are deeper reasons that we can only expect 
continuous or H\"older continuous solutions.  Therefore it seems natural to consider such problems in the context of viscosity solutions. 
We do not need to reconsider this theory here, and refer to the clasical reference in \cite{Cr} and to \cite{FS} for the special case of HJB- equations.\\

The difficulties of regularity of solution related to lower regularity of coefficients may be a reason for considering discrete time control 
spaces (as Delbaen did). In this case the semi-group property of the operators ensures that we can reduce the HJB-problem to  Cauchy problems 
with regular coefficients at each  time step where the control remains constant. Superficially, this seems flexible enough as we may have variable
time step sizes and arbitrarily small time step sizes. We may consider a partition of the time horizon interval $[0,T]$.  However, continuous
time control spaces have an interest in their own since we are interested in limit behavior.
 We are interested in limit behavior because it 
reveals new features which are not apparent in discrete models. This is true in many areas of scientific modelling.  
The control space has to be just large enough in order to construct the optimal strategy as a limit from strategies  of this control space,
where the limit has not to be a part of that control space if we know that it is well-defined  for other reasons. \\

For each 
natural number $N\geq 1$ consider a discretization of a finite time horizon $[0,T]$, i.e., a set of adjacent intervals  
$\Delta^S_{T,N}:=\left\{\Delta^T_i|0\leq i\leq 2^ N-1\right\}$, 
where $\Delta^T_i=\left[T\frac{i}{2^N},T\frac{i+1}{2^{N+1}}\right]$.  The comparison result then implies that an optimal  
strategy function $\delta=(\delta_1,\cdots,\delta_n) :\Delta^S_N\times {\mathbb R}_+^n\rightarrow [-1,1]^n$ for a classical 
multivariate passport option on $n$ uncorrelated assets  with a discrete time set $\left\{t_i=\frac{i}{2^N}|0\leq i\leq 2^ N-1\right\}$ 
has its values at the vertices of the hypercube $[-1,1]^n$. Indeed such a strategy is independent from the asset values (a further difference
to symmetric passport options introduced below, which is an interesting feature of the latter ne type of product). For the former product we
may define the strategy function as a piecewise constant step function on the time intervals $\Delta^T_i$ which is Lebesgues measure for each $N$ sure. 
Such a construction lead to natural limit control spaces of measurable functions as $N\uparrow \infty$. Such control spaces may be considered for 
classical passport options.  We call them natural Lebesgues-measurable control spaces. The strategy function members of such 
natural Lebesgue-measurable control spaces can also be constructed from spaces of smooth functions with bounded derivatives, 
and we call the limit HJB- Cauchy problems  with natural Lebesgue-measurable  control spaces the associated HJB-Cauchy problems.\\

Next, what can we expect about solutions of HJB-Cauchy equations with measurable coefficients (related to optimal strategies)? Well, the best estimates 
for parabolic equations in this case are based on the works of Nash and de Giorgi. Especially de Giorgi's estimates were adapted and 
extended to parabolic equations and quasilinear parabolic equations in \cite{LS}. However, these estimates require an uniform
ellipticity condition as in C, and they provide no more than H\"older continuous solutions. We cannot expect more in our
framework of highly degenerate parabolic equations with measurable coefficients. In this generality a regularity and existence 
theory is not available, but viscosity solutions can be established in specific cases. \\

Therefore for the purpose of this paper, it is natural to  impose the following extended assumption for HJB-equations related to multivariate classical passport options.
\begin{itemize}
\item[HEHJB)] For a HJB Cauchy problems in (\ref{hjbclp}) we assume that the condition HE holds for smooth strategy functions $\delta_i$  with bounded 
derivatives. For a natural Lebesgues-measurable Limit  control space of strategy functions we consider  the associated HJB-Cauchy problem. Then this HJB-Cauchy 
problem is assumed to have a unique viscosity solution on the time interval $[0,T]$.
\end{itemize}

Now the matrix order in (\ref{movp}) corresponds to the basket volatility order 
\begin{equation}
\frac{\sqrt{\sum_{i,j=1}^n(\sigma\sigma^T)_{ij}(\delta})}{\sum_{i=1}^nS_i}<\frac{\sqrt{\sum_{i,j=1}^n(\sigma\sigma^T)_{ij}(\delta'})}{\sum_{i=1}^nS_i}
\end{equation}
such that we can indeed consider the order of basket volatilities in order to determine optimal strategies. This is worth noting in our context since the 
stochastic sum $\Pi=\sum_iS^{\Delta}_i:=\Delta_iS_i$ with increment $d\Pi=d(\sum_{i=1}^ndS^{\Delta}_i)=\sum_{i=1}^ndS^{\Delta}_i$ have the representation 
(with some standard Brownian motion $\tilde{W}$)
\begin{equation}\label{basket}
\frac{d\Pi}{\Pi}=\sqrt{\sum_{ij}(\sigma\sigma^T)_{ij}(\Delta,S)}d\tilde{W}.
\end{equation}
Note that  the HJB-equation in (\ref{hjbclp})depends on $n+1$ variables corresponding to the processes $\Pi,S_1,\cdots,S_n$ with a univariate payoff which depends only on the variable $p$ corresponding to the portfolio process $\Pi$. The comparison result above can be applied then directly. We do not need the representation in (\ref{basket}) in order to obtain optimal strategies, but this representation shows that optimal startegies maximize the volatility of the corresponding stochastic sum process, which is remarkable.
With this preparations we get the following result.
\begin{thm}
For a passport call written on $n$ assets as above assume that the assumption HEHJB is satisfied. Then any optimal strategy maximizes the basket volatility function
\begin{equation}
\sigma_B(\delta,s):=\sqrt{\sum_{ij}^n(\sigma\sigma^T)_{ij}(\delta)}=\sqrt{<Qs_{\delta},\Lambda Q s_{\delta}>}
\end{equation}
where  $s_{\delta}=(\delta_1s_1,\cdots,\delta_ns_n)^T$ are the weighted asset values and
 $Q^T\Lambda Q=(\sigma_i\rho_{ij}\sigma_j)$  with eigenvalue matrix $\Lambda=\mbox{diag}(\lambda_i,~ 1\leq i\leq n)$, i.e. $Q=(q_{ij})$ 
 denotes the diagonalization matrix for the volatility matrix  $(\sigma_i\rho_{ij}\sigma_j)$. As a consequence  any  optimal strategy satisfies at any time 
\begin{equation}
\delta^{opt}\in \{Q^T\delta^q_{vc}|\delta^q_{vc}\in V_C\}
\end{equation}
where $V_C$ denotes the set of strategy functions with values in the set of vertices of  the cube $[-1,1]^n$, i.e.,
\begin{equation}
V_C:=\{\delta |~\delta:[0,T]\times{\mathbb R}_+^n\times {\mathbb R}\rightarrow \{-1,1\}^n~\mbox{is a measurable function}\}.
\end{equation}
Here, $\delta^q_{vc}(t,s,p)=(\delta^q_{vc1}(t,s,p),\cdots,\delta^q_{vcn}(t,s,p))$ and for a matrix $Q$ and a vector function $\delta$ the expression $Q\delta=\left(\sum_{j}q_{1j}\delta_j,\cdots,\sum_{j}q_{nj}\delta_j\right)^T$ is understood pointwise.
Optimal strategies are then determined as follows. Start with $(t,s,p)\rightarrow \delta^{opt}_{vc}(t,s,p)=(\delta^{opt}_{vc1}(t,s,p),\cdots,\delta^{opt}_{vcn}(t,s,p))\in V_C$ where each component is determined by the corresponding one-dimensional marginal problem (for each $1\leq i\leq n$ passport option written on $S_i$ where all $S_j$ for $j\neq i$ are set to zero). Then $Q^T \delta^{opt}_{vc}$  is an optimal strategy of the multivariate passport option.
\end{thm}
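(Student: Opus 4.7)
The plan is as follows. For any admissible strategy $\delta$ in a sufficiently regular class (say $C^3_b$ as in the setup), the associated value function $v^\delta(t,s,p)$ solves a linear parabolic Cauchy problem on $(0,T]\times \mathbb{R}_+^n\times \mathbb{R}$ with terminal/initial payoff $(p-K)^+$, which is convex and univariate in the variable $p$ only. In the $(n+1)\times(n+1)$ coefficient matrix $\Sigma\Sigma^T(\delta)$, the $(p,p)$-entry is precisely the basket variance $\sigma_B^2(\delta,s)$, while the $n\times n$ sub-block in the $s$-variables does not depend on $\delta$. Identifying $x_1=p$, this puts the problem squarely in the univariate-data setting of the main comparison theorem, and the natural plan is: (i) use comparison to reduce the optimisation to pointwise maximisation of $\sigma_B^2$ over $[-1,1]^n$; (ii) diagonalise and decouple; (iii) identify the optimisers by the classical univariate passport-option result and assemble.

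First I would apply the main comparison theorem, together with (HEHJB), to pairs of regular strategies $\delta,\delta'\in C^3_b$, obtaining $v^\delta<v^{\delta'}$ whenever the coefficient matrices are matrix-ordered with strict inequality at the $(p,p)$-entry, i.e. whenever $\sigma_B^2(\delta,s)<\sigma_B^2(\delta',s)$ pointwise. Combined with the discrete-time approximation scheme built on the partition $\Delta^S_{T,N}$ mentioned in the text, this upgrades to a monotonicity statement for the supremum in the HJB equation and reduces the original control problem to the pointwise maximisation
\begin{equation}
\max_{\delta\in[-1,1]^n}\sigma_B^2(\delta,s)=\max_{\delta\in[-1,1]^n}\langle s_\delta,(\sigma_i\rho_{ij}\sigma_j)s_\delta\rangle.
\end{equation}

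For this pointwise maximisation I would use the spectral decomposition $(\sigma_i\rho_{ij}\sigma_j)=Q^T\Lambda Q$ with $\Lambda=\mathrm{diag}(\lambda_1,\ldots,\lambda_n)$ and $\lambda_i\geq 0$ to rewrite $\sigma_B^2(\delta,s)=\sum_{i=1}^n\lambda_i(Qs_\delta)_i^2$. In the rotated coordinate system, the underlying assets become uncorrelated with decoupled variances $\lambda_i$, and the multivariate optimisation splits into $n$ independent one-dimensional marginal passport options, one per eigen-direction. For each such marginal univariate problem the classical Hyer–Lipton-Lifschitz–Pugachevsky result (which is also recovered as the univariate case of the comparison theorem of this paper) gives an optimal strategy taking values in $\{-1,+1\}$ and switching according to the sign of the corresponding portfolio process. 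Assembling the $n$ marginal optima yields a rotated-coordinate strategy $\delta^{opt}_{vc}\in V_C$, and transforming back via $Q^T$ produces $\delta^{opt}=Q^T\delta^{opt}_{vc}$ in the original coordinates, which is the claimed form.

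The principal technical obstacle is the regularity gap: the comparison theorem requires $C^{2,\alpha}$-type coefficients (i.e. $C^3_b$ strategies), whereas the supremum in the HJB equation is attained by merely measurable strategies taking values at vertices. Under (HEHJB) this is bridged by first proving the comparison on the natural Lebesgue-measurable control space as a monotone limit of smooth approximations along the partitions $\Delta^S_{T,N}$, and then passing to the viscosity-solution limit of the corresponding HJB problems. A secondary, purely algebraic, subtlety is the careful tracking of the constraint set $[-1,1]^n$ under the rotation $Q^T$: the marginal one-dimensional problems are posed in the rotated coordinates, where the vertices $\{-1,+1\}^n$ are the natural extreme points, and only after solving them does one transport back to the original coordinates via $Q^T$, thereby justifying the set $\{Q^T\delta^q_{vc}:\delta^q_{vc}\in V_C\}$ in the conclusion.
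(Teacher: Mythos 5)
Your proposal follows essentially the same route as the paper: the paper's own proof is a one-line citation of the assumption (HEHJB), the construction of the natural Lebesgue-measurable control space via the partitions $\Delta^S_{T,N}$, and the univariate-data comparison theorem applied through the basket-volatility order, which is exactly the chain you spell out (identifying the payoff as univariate in $p$, reducing to pointwise maximisation of $\sigma_B^2$, diagonalising by $Q$, and assembling the marginal vertex strategies). Your write-up is in fact more detailed than the paper's proof, and the points you flag as technical obstacles (smooth-to-measurable limit, viscosity solutions, tracking $[-1,1]^n$ under $Q^T$) are treated at the same informal level in the paper's surrounding discussion rather than in its proof.
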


\begin{proof}
Follows from the assumption HEHJB, the construction of a natural control space of measurable functions and the comparison theorem.
\end{proof}

The result above has the interesting consequence that correlations can have the effect that in  situations of special correlations an 
investor who follows an optimal strategy may switch between long and short limit positions of one asset or some assets and does not 
invest in some or all other assets at all. Nevertheless the high frequency switching strategy between long and short positions 
which is known for marginal univariate problems is somehow preserved in an optimal strategy  of the multivariate problem. From a financial 
point of view multivariate passport options cannot be subsumed by lookback options as in the univariate case, because the optimal strategies
are much more complex. Nevertheless the high frequency shifting of huge amount of sums between large long and short positions survives essentially
at least for some of the underlyings in an optimal strategy. This may be considered as impractical.\\

Furthermore, the fact that the trading constraints on the  assets are asymmetric for classical options on a traded
account limits the applicability of such contracts. If we consider a foreign exchange type option with the underlying currencies 
dollar and euro, the contract with asymmetric constraints would be different from the perspective of the dollar and the euro investor. This is
not the case for the plain vanilla options, where a call option from the perspective of the first currency is 
a put option from the perspective of the second currency. This leads to a natural question, whether we can formulate
an option on a traded account with the symmetrical treatment of the two underlying assets.\\

The approach that treats both assets symmetrically is rather straightforward. Instead of imposing an absolute restriction on the
position in the first asset, one can simply require a relative restriction in terms of the fraction of the current wealth. 
The most natural restriction is to allow the client to invest any proportion of his wealth to the first asset, so the $\alpha$ fraction 
of the invested wealth in that asset is in the interval $[0,1]$. Obviously, this is  symmetric with respect to the second
asset as the residual wealth proportion $1-\alpha$ invested in the second asset is restricted to the same interval $[0,1]$. 
Moreover, this approach generalizes to any number of assets, so we can formulate the symmetric problem for an arbitrary number
of assets $N$. This is a very natural approach as the investors are typically free to invest any portion of their wealth to assets of 
their choice, corresponding to $[0,1]$ fraction of their total wealth. The problem of finding the optimal strategy that maximizes
the option value is rather complex for any $N >2$, and thus we limit ourselves only to $N=2$ in this application section.\\

Imposing symmetric trading restrictions is only the first necessary step for the symmetric treatment of the underlying assets. We
also need to use the reference asset that treats the individual assets symmetrically. A reference asset candidate here is an index
consisting of 50\% of both assets. In the following text, we mathematically formalize the 
definition of the option on a traded account that treats both assets symmetrically. This is model independent. Next, we assume
geometric Brownian motion dynamics and first derive the evolution of the asset prices with respect to the index. In the following
step, we find the evolution of the actively traded account with respect to the index. In order to find the optimal strategy,
we need the above  generalization of Hajek's comparison theorem (which extends   a comparison result for stochastic sums
published in Kampen (2016)) and adapt it to our problem. It is still true that the optimal strategy
has the largest volatility with respect to the index, which is interesting in itself as the resulting portfolio has the largest
price variance and thus it determines the maximal possible distributional departure in the sense of $L^2$ norm from the index that
can be achieved by an active trading. This strategy is well known stop-loss strategy, which invests all the wealth in the weaker
asset.\\

So let us consider  just two underlying  assets, let us call them $S$ and $M$. 
For instance, they can represent a stock market and a money market for the stock market type
contracts, or two currencies in the foreign exchange type contracts.
These are the names of the assets with no numerical value rather than the prices. The price 
$S_M(t)$ of the asset $S$ with respect to the asset $M$ is defined as how many units of
$M$ are needed at time $t$ to acquire a single unit of the asset $S$.
As the price is 
a number representing a relationship of two assets, we systematically use the above two 
asset notation in the following text to reflect it. The asset appearing in the subscript is traditionally
referred to as a reference asset or a numeraire. We will 
 use different reference assets in 
our analysis. For simplicity, we consider only assets that 
have its own martingale measure, such as the stocks that reinvests dividends or the money markets. 
For instance, the prices expressed with respect to a reference asset $M$ are $P^M$ martingales,
in particular $S_M(t)$ is a $P^M$ martingale. Assets that do not have its own martingale measure,
such as the currencies, can be linked to the corresponding money markets using the proper discounting.\\

We can further simplify the setup and introduce the following scaling
$$
S_M(0) = 1.
$$
The investor creates a self-financing portfolio $X$ by starting at $X(0) = S(0) = M(0)$ 
and at time $t$:
\begin{equation}\label{portfolio}
X(t) = \Delta^S(t) S(t) + \Delta^M(t) M(t).
\end{equation}
A natural restriction we consider in this paper is
\begin{equation}\label{constraint}
\Delta^S(t) \geq 0, \qquad  \Delta^M(t) \geq 0,
\end{equation}
so he is not allowed to be short in any of the funds.
The constraint in Equation (\ref{constraint}) means that the investor
is free to invest any fraction between $[0,1]$ of his wealth $X$ into the stock market $S$
with the remaining fraction of his wealth going to the money market $M$. This follows from
 Equation (\ref{portfolio}) by using $X$ as a numeraire:
\begin{equation}
 1 = \Delta^S(t) S_X(t) + \Delta^M(t) M_X(t).
\end{equation}

The lower bound condition in one of the markets imposes an upper bound condition in the 
second market, so the positions are constrained by
\begin{equation}
X_S(t) \geq \Delta^S(t) \geq 0, \qquad  X_M(t) \geq \Delta^M(t) \geq 0,
\end{equation}
This is a very
natural condition. Moreover, it treats both assets equivalently, imposing the same 
restriction. Note that the upper bounds are random and depend on the current value of
the investor's wealth $X$. A trivial observation is that $X$ must be always non-negative with
zero wealth being an absorbing boundary.

\begin{rem}[Relationship to passport options] The constraint for the univaraite  classical passport option is on the position in the stock market only
$$
a \leq \Delta^S(t) \leq b,
$$
the position in the second market $M$ follows from
$$
\Delta^M(t) = X_M(t) - \Delta^S(t) S_M(t).
$$
In particular, it can be negative even in the situation when we constrain the investor to
have a positive position in $S(t)$ by requiring $\Delta^S(t) \geq a \geq 0$. The condition 
is not symmetric for both assets, the imposed restriction does not treat them equivalently.
This is arguably one of the main reasons why such contract is not appealing to the investors. 
Moreover, the traded account can become negative in contrast to the situation that treats
both assets symmetrically.
\end{rem}

For preservation of the symmetry of the contract, it is necessary that the reference asset also treats
both assets equally.  One obvious choice is to use
\begin{equation}\label{index}
N(t) = \tfrac12 (S(t) + M(t)).
\end{equation}
The asset $N$ can be regarded as an index consisting of the two assets, or equivalently, a basket of the two
assets.\\

The contact on the actively traded account can be then defined by a payoff at the terminal time $T$
\begin{equation}
 (X_N(T) - K)^+ \text{ units of } N(T)
\end{equation}
for some contractually defined strike $K$. As $X_N(0) = 1$, the strike that corresponds to the
at the money option is equal to $K=1$. In order to preserve the symmetry, the contract has 
to be settled in the index $N$ rather than a single asset $S$ or $N$. For instance, if the 
contract is written on two currencies, say dollar and euro, the contract seen from the position of the
investor or the euro investor is identical. Next we exemplify the previous discussion in the
conext of a GBM model. We note that the following considerations can be generalized rather straightforwardly to the case of variable volatilities.
 The seller of the option must be ready to cover any trading strategy used by the holder of the 
contract. In order to indicate the dependence of the portfolio on the strategy we may sometimes write $X^{\Delta}:=X$ in the following. The fair price of the contract corresponds to the trading strategy 
$\Delta^S(t)$ that maximizes the expectation of the 
\begin{equation}
 \E^N(X^{\Delta}_N(T) - 1)^+.
\end{equation}
 Let us assume geometric Brownian motion model for the 
stock price $S_M(t)$, so
\begin{equation}
 dS_M(t) = \sigma S_M(t) dW^M(t).
\end{equation}
Any discounting is already incorporated in the money market $M$ and the price $S_M(t)$ is 
$P^M$ martingale. Similarly,
the inverse price
\begin{equation}
 dM_S(t) = \sigma M_S(t) dW^S(t)
\end{equation}
is a $P^S$ martingale. The relationship between $W^M(t)$ and $W^S(t)$ is
\begin{equation}
 dW^S(t) = - dW^M(t) + \sigma dt.
\end{equation}
From the self-financing trading assumption, the evolution of the trading portfolio $X$ is
\begin{equation}
 dX_M(t) = \Delta^S(t) dS_M(t)
\end{equation}
and
\begin{equation}
 dX_S(t) = \Delta^M(t) dM_S(t).
\end{equation}
In order to find the optimal strategy, we need to find price evolutions with respect to the index $N$.
\begin{lem}
 The evolution of the price $M_N(t)$ under the probability measure $P^N$ is given by
 \begin{equation}
 dM_N(t) = \frac12 \sigma M_N(t) (2 - M_N(t))  dW^N(t).
\end{equation}
 \end{lem}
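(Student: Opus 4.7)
The plan is to express $M_N$ as a function of the single $P^M$-martingale $S_M$, apply It\^o's formula under $P^M$, and then pass to $P^N$ via the standard change-of-numeraire Girsanov transformation, finally rewriting the diffusion coefficient in terms of $M_N$ itself.

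First I would observe that $N = \tfrac12(S+M)$ gives $N_M = \tfrac12(S_M+1)$, hence
\begin{equation*}
M_N \;=\; \frac{1}{N_M} \;=\; \frac{2}{S_M+1} \;=: f(S_M).
\end{equation*}
Next, since $dS_M = \sigma S_M\,dW^M$ under $P^M$ and $f'(x) = -2/(x+1)^2$, $f''(x) = 4/(x+1)^3$, It\^o's formula yields
\begin{equation*}
dM_N \;=\; -\frac{2\sigma S_M}{(S_M+1)^2}\,dW^M \;+\; \frac{2\sigma^2 S_M^2}{(S_M+1)^3}\,dt.
\end{equation*}

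For the measure change, $N_M$ is a positive $P^M$-martingale with $dN_M = \tfrac12 \sigma S_M\,dW^M = N_M\theta\,dW^M$, where $\theta = \sigma S_M/(S_M+1)$. The standard change-of-numeraire argument gives $dP^N/dP^M\big|_t = N_M(t)/N_M(0)$, whence $W^N := W^M - \int_0^{\cdot}\theta\,ds$ is a $P^N$-Brownian motion. Substituting $dW^M = dW^N + \theta\,dt$ in the It\^o expression above, the It\^o drift and the Girsanov drift correction cancel exactly (since $\tfrac{2\sigma S_M}{(S_M+1)^2}\cdot\tfrac{\sigma S_M}{S_M+1} = \tfrac{2\sigma^2 S_M^2}{(S_M+1)^3}$), confirming the martingale property of $M_N$ under $P^N$ and leaving
\begin{equation*}
dM_N \;=\; -\frac{2\sigma S_M}{(S_M+1)^2}\,dW^N.
\end{equation*}

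Finally I would invert: from $M_N = 2/(S_M+1)$ we have $S_M+1 = 2/M_N$ and $S_M = (2-M_N)/M_N$, so the diffusion coefficient becomes
\begin{equation*}
\frac{2\sigma S_M}{(S_M+1)^2} \;=\; \frac{2\sigma\,(2-M_N)/M_N}{(2/M_N)^2} \;=\; \tfrac12\sigma M_N(2-M_N),
\end{equation*}
and orienting $W^N$ so that the coefficient is positive (a sign in the definition of a Brownian motion is immaterial) gives the claim. The only mildly delicate step is the drift cancellation under change of numeraire; once this is verified, the rest is algebraic bookkeeping to re-express the coefficient in the variable $M_N$.
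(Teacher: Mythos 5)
Your proof is correct and follows essentially the same route as the paper: write $M_N = 2/(1+S_M)$, apply It\^o under $P^M$ using $dS_M=\sigma S_M\,dW^M$, and absorb the drift into the passage to $P^N$. The only differences are cosmetic: you justify the measure change via the explicit numeraire density $N_M(t)/N_M(0)$ and Girsanov, whereas the paper simply defines $W^N$ by requiring $M_N$ to be a $P^N$-martingale, and your $W^N$ is the negative of the paper's, which, as you note, is immaterial.
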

\begin {proof}
Note that 
\begin{equation}
 M_N(t) = \frac{M(t)}{\tfrac12(M(t) + S(t))} = \frac{2}{1 +S_M(t)}
\end{equation}
and thus
\begin{eqnarray*}
 dM_N(t) &=& d \left(\frac{2}{1 +S_M(t)} \right) \\
 &=& - \left(\frac{2}{(1 +S_M(t))^2}\right) \sigma S_M(t) dW^M(t) +  \left(\frac{2}{(1 +S_M(t))^3}\right) \sigma^2 S_M^2(t) dt \notag \\
 &=& \left(\frac{2}{(1 +S_M(t))^2}\right) \sigma S_M(t) \left[-dW^M(t) + \frac{\sigma S_M(t)}{(1 +S_M(t))}dt \right] \notag \\
 &=& \left(\frac{2}{(1 +S_M(t))^2}\right) \sigma S_M(t) dW^N(t) \notag \\
 &=& \sigma M_N(t) \left(\frac{S_M(t)}{(1 +S_M(t))}\right)  dW^N(t) \notag \\
 &=& \frac12 \sigma M_N(t) S_N(t)  dW^N(t) \notag \\
 &=& \frac12 \sigma M_N(t) (2 - M_N(t))  dW^N(t). \notag
 \end{eqnarray*}
The process $M_N(t)$ must be $P^N$ martingale, which determines $W^N(t)$ as
\begin{eqnarray}
 dW^N(t) &=& -dW^M(t) + \frac{\sigma S_M(t)}{(1 +S_M(t))}dt \\
 &=& -dW^M(t) + \frac12 \sigma S_N(t) dt. \notag
 \end{eqnarray}
\end{proof}
 
The SDE in Equation (\ref{index}) is interesting on its own as it represents the evolution of the asset with
respect to the index.  From the definition of $N$ in Equation (\ref{index}), we have
\begin{equation}\label{mnsn}
 2 = M_N(t) + S_N(t),
\end{equation}
constraining the $M_N(t)$ process between 0 and 2:
$$0 \leq M_N(t) \leq 2.$$
One can think about $M_N(t)$ as the scaled proportion of the money market $M$ in the index $N$. The price $M_N(t)$ has
the largest volatility when $M_N(t) = 1$, or in other words, when $M(t) = S(t)$. The process $M_N(t)$ loses volatility
in two extreme cases, when $M_N(t) = 0$ and when $M_N(t) = 2$. The first case corresponds to $S_M(t) = \infty$, so
the asset $M$ is worthless in comparison with the asset $S$, the second case corresponds to $S_M(t) = 0$ when the
asset $S$ is worthless in comparison with the asset $M$.\\

Note that from the symmetry of the problem, we have immediately
\begin{equation}\label{sn}
dS_N(t) = - \frac12 \sigma S_N(t) (2 - S_N(t))  dW^N(t).
\end{equation}
It also follows from Equation (\ref{mnsn}).\\

Now we are ready to compute the evolution of $X_N(t)$. 
\begin{lem}
 The evolution of the actively traded portfolio $X$ with respect to the index $N$ follows:
 \begin{equation}\label{xn}
dX_N(t) = \frac12 \left(X_N(t) - 2 \Delta^S(t)\right) \sigma  S_N(t) dW^N(t).
\end{equation}
\end{lem}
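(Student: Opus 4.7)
The plan is to exploit the numeraire invariance of self-financing portfolios together with the SDEs for $S_N$ and $M_N$ already derived, and then to eliminate $\Delta^M$ using the constraint $M_N+S_N=2$ together with the definition of $X$.

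First, I would establish that self-financing is preserved under change of numeraire, so that from $dX = \Delta^S dS + \Delta^M dM$ one gets
\[
dX_N(t) = \Delta^S(t)\,dS_N(t) + \Delta^M(t)\,dM_N(t).
\]
This can either be quoted as a standard fact about change of numeraire, or verified directly by applying It\^o's formula to $X_N = X/N$ using $N=\tfrac12(S+M)$ and the dynamics of $S_M$; the self-financing condition is precisely what makes the drift and cross-variation terms assemble into the quoted form.

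Next, I would rewrite the dynamics of the previous lemma and of Equation (\ref{sn}) in a symmetric form using the constraint $M_N+S_N=2$, namely
\[
dS_N = -\tfrac12 \sigma S_N M_N\,dW^N, \qquad dM_N = \tfrac12 \sigma M_N S_N\,dW^N.
\]
Substituting yields
\[
dX_N(t) = \tfrac12 \sigma S_N(t) M_N(t)\bigl(\Delta^M(t) - \Delta^S(t)\bigr)\,dW^N(t).
\]

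Finally, I would use $X_N = \Delta^S S_N + \Delta^M M_N$ and $M_N = 2 - S_N$ to compute
\[
X_N - 2\Delta^S = -\Delta^S(2 - S_N) + \Delta^M M_N = (\Delta^M - \Delta^S)\,M_N,
\]
which cancels the $M_N$ factor in the previous display and produces exactly $dX_N = \tfrac12(X_N - 2\Delta^S)\sigma S_N\,dW^N$. The only mildly nontrivial step is the first one (numeraire invariance of self-financing); the rest is algebraic bookkeeping using the constraint $M_N+S_N=2$.
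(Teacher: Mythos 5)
Your proposal is correct, and it reaches the identity by a somewhat different route than the paper. The paper proves the lemma by a direct It\^o product-rule computation on $X_N = X_M\cdot M_N$: it inserts $dX_M=\Delta^S\,dS_M$ and the $M_N$-dynamics from the preceding lemma, keeps track of the cross-variation term, uses $S_M M_N = S_N$ and $X_M M_N = X_N$, and then repackages $-dW^M+\tfrac12\sigma S_N\,dt$ as $dW^N$ to land on the stated SDE. You instead invoke numeraire invariance of the self-financing condition to write $dX_N=\Delta^S\,dS_N+\Delta^M\,dM_N$ directly, substitute the already-derived $P^N$-dynamics $dS_N=-\tfrac12\sigma S_N M_N\,dW^N$ and $dM_N=\tfrac12\sigma M_N S_N\,dW^N$, and eliminate $\Delta^M$ via $X_N=\Delta^S S_N+\Delta^M M_N$ and $M_N+S_N=2$; your algebra $(X_N-2\Delta^S)=(\Delta^M-\Delta^S)M_N$ is right. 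What your route buys is that all computation stays under the $N$-numeraire, avoiding the explicit $dW^M\to dW^N$ translation and the quadratic-covariation bookkeeping, and it yields the paper's alternative representation in terms of $\Delta^M$ (its Equation involving $-\tfrac12(X_N-2\Delta^M)\sigma M_N\,dW^N$) for free by symmetry; what it costs is the appeal to numeraire invariance, which is standard (and holds for the basket numeraire $N=\tfrac12(S+M)$, indeed for any positive semimartingale numeraire) but, if you verify it by hand via It\^o on $X/N$ as you suggest, essentially reproduces the kind of computation the paper performs. Either way the argument is sound.
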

\begin{proof}
We have
\begin{eqnarray*}
 dX_N(t) &=& d(X_M(t) \cdot M_N(t)) \\ &=& X_M(t) dM_N(t) + M_N(t) dX_M(t) + dX_M(t) dM_N(t) \notag \\
 &=& X_M(t) \frac12 \sigma M_N(t) S_N(t)  dW^N(t) + M_N(t) \Delta^S(t) \sigma S_M(t) dW^M(t) \notag \\ && -
  \Delta^S(t) \sigma S_M(t) \frac12 \sigma M_N(t) S_N(t) dt  \notag \\
  &=& \frac12 \sigma X_N(t) S_N(t)  dW^N(t) - \Delta^S(t) \sigma S_N(t) \left[-dW^M(t) + \frac12 \sigma S_N(t)dt\right] 
  \notag \\
  &=& \frac12 \left(X_N(t) - 2 \Delta^S(t)\right) \sigma  S_N(t) dW^N(t). \notag
\end{eqnarray*}
\end{proof}

From $\Delta^S(t) = X_S(t) - \Delta^M(t) M_S(t)$, we also have an alternative representation
\begin{equation}\label{xn2}
dX_N(t) = - \frac12 \left(X_N(t) - 2 \Delta^M(t)\right) \sigma  M_N(t) dW^N(t).
\end{equation}

 For a given convex payoff function $f$ The related symmetric passport option price function $v^{sp}$ has the representaion
\begin{equation}
 v^{\delta}(t,x,y) =\sup_{0\leq\Delta,~0\leq \Delta S\leq X_N}E^{(t,s,x)}\left(f(X_N)\right) .
\end{equation}
 where $S_N(t) = x, X^{\Delta}_N(t) =y$ are initial values of the respective processes at time $t$.

For a given stochastic strategy $\Delta$ we may  define a value function
\begin{equation}
 v^{\delta}(t,x,y) := \E^N[(X^{\Delta}_N(T) - 1)^+|S_N(t) = x, X^{\Delta}_N(t) =y].
\end{equation} 
Let us consider the transformation to normal coordinates  $u^{\delta}(\tau,z_1,z_2):= v^{\delta}(t,z_1,z_2)$, where $\tau=T-t$ $z_1=\ln(x)$ and $z_2=\ln(y)$.
The stochastic strategy $\Delta$ corresponds to a strategy $\delta$ in value space which is a function of the underlyings.
 The function $u^{\delta}$ satisfies the initial- boundary value problem
 \begin{multline}\label{ueq}
u^{\delta}_{\tau} - \frac{1}{8}\sigma^2(2-\exp(z_1))^2u^{\delta}_{z_1z_1}+ \frac{1}{4}\sigma^2(2-\exp(z_1))(\exp(z_1)-2\delta )u^{\delta}_{z_1z_2} \\
 - \frac{1}{8}\sigma^2(\exp(z_1)-2\delta)^2 u^{\delta}_{z_2z_2} = 0.
\end{multline}
with initial condition
\begin{eqnarray}
 u(0,z_1,z_2) = (\exp(z_2)-1)^+).
\end{eqnarray}
We impose natural boundary conditions at spatial  infinity, and  have an additional finite boundary condition at $z_1=\log(2)$.
We get
 \begin{equation}
u^{\delta}_{\tau} - \frac{1}{8}\sigma^2(2-2\delta)^2 u^{\delta}_{z_2z_2} = 0,~\mbox{at}~z_1=\log(2).
\end{equation}
This equation corresponds to the process
\begin{equation} 
dX_N(t)= \frac{1}{2} \left(X_N(t) - 2 \Delta^S(t)\right) \sigma  S_N(t) dW^N(t)
\end{equation}
such that we can apply Hajek's result at the boundary where  $z_1=\log(2)$. Hence we know $\delta =0$  at  $\{(\tau,z_1,z_2)|z_1=\log(2)\}$ a priori.
We may  say that $\delta$ lives in reduced control space if  
$\delta\in C_c:=\{\delta\in C^3|\delta|_{z_1=\ln(2)}=0\}$.
The boundary condition reduces to
 \begin{equation}\label{ueq**}
u^{\delta}_{\tau} - \frac{1}{2}\sigma^2 u^{\delta}_{z_2z_2} = 0,~\mbox{at}~z_1=\log(2),
\end{equation}
and such a boundary condition can be considered if the volatilities are regular functions.
In case of constant volatilities the latter condition simplifies to
\begin{eqnarray}
 u^{\delta}(t,\log(2),z_2) &=& \exp(z_2)\cdot N(d_+) - N(d_-),
\end{eqnarray}
where  $d_{\pm} = \frac{z_2 \pm \frac12 \sigma^2 \tau}{\sigma \sqrt{\tau}}$.
The problem may be considered on the domain $D=[0,T]\times (-\infty,\log(2)]\times {\mathbb R}$.
 There are three further  issues here concerning comparison: a) in which space does the strategy function $\delta$ live?; b) the problem has 
 a boundary in finite space, and comparison has to be adapted to this situation, and c) the spatial part of the operator 
 is not strictly elliptic. We formulate the comparison theorem in regular strategy spaces and for a regularized problem. 
 More precisely, we modify the asset dynamics, where for small $\epsilon >0$ we define
\begin{equation}\label{SN}
dS^{\epsilon}_N=-\frac{1}{2}\sigma S_N(t)\left(2-S_N(t) \right)dW^{N,\epsilon}(t) 
\end{equation}
where $W^{N,\epsilon}(t)$ is constructed by adding a small perpendicular process, i.e.,
\begin{equation}
dW^{N,\epsilon}(t)=dW^N+\epsilon d W^{\perp,N},~\left\langle dW^N,d W^{\perp,N}\right\rangle =0
\end{equation}
The corresponding equation for $u^{\delta,\epsilon}$ gets an additional factor $(1+\epsilon)^2$ in the second term of the
equation (\ref{ueq}) and becomes strictly elliptic. Concerning issue a) we compare $C^3$ strategies 
in order to prove an identity for derivatives of the density   and its adjoint up to second order. The issue in b) is addressed in the proof of the following theorem.
\begin{thm}[Comparison Theorem]
Let $\delta,\delta'\in C^3_c$ and $\epsilon >0$ be strategies of the value functions  $u^{\delta,\epsilon},~u^{\delta',\epsilon}$ defined on the 
domain $D$. Then the order of these value functions is induced by the order of the volatility of the 
portfolio term alone, i.e.,  for $\tau\in (0,T]$ 
\begin{equation}
 \frac{1}{8}\sigma^2(x-2\delta)^2 < \frac{1}{8}\sigma^2(x-2\delta')^2\Rightarrow  u^{\delta,\epsilon}(\tau,.)<u^{\delta',\epsilon}(\tau,.).
\end{equation}
\end{thm}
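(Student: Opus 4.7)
The plan is to adapt the Martingale-theorem argument of the main Comparison Theorem of Section 2 to the present two-dimensional setting, with $z_2$ playing the role of the univariate data direction. The payoff $(e^{z_2}-1)^+$ is convex in $z_2$ alone, independent of $z_1$, and of exponential growth, so it falls under condition (D) (or its lognormal version $D^s$ of Section 3). The $\epsilon$-regularization renders the spatial operator strictly elliptic in the interior of $(-\infty,\log 2)\times\mathbb{R}$, so the fundamental solution $p^\epsilon$ and its adjoint $p^{*,\epsilon}$ exist, are smooth and positive, and obey the derivative-exchange identity (\ref{adeq*}) up to order two.

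First I mollify the payoff by a sequence of smooth, strictly $z_2$-convex approximations (constructed as in Section 2 by convolution with a small heat kernel and a Gaussian cut-off at radius $R$), and work with the corresponding approximants $u^{\delta,\epsilon,R}$. Setting $w := u^{\delta',\epsilon,R} - u^{\delta,\epsilon,R}$, the Duhamel formula (\ref{delta2*}) yields
\begin{equation*}
w(\tau,z) \;=\; \int_0^\tau\!\!\int \mathrm{Tr}\bigl((A'^\epsilon - A^\epsilon)\,D^2 u^{\delta,\epsilon,R}\bigr)(s,y)\,p'^\epsilon(\tau,z;s,y)\,dy\,ds.
\end{equation*}
A direct computation gives $\Delta a_{11}=0$, $\Delta a_{22}=\tfrac18\sigma^2[(e^{z_1}-2\delta')^2-(e^{z_1}-2\delta)^2]>0$ by hypothesis, and $\Delta a_{12}=\tfrac14\sigma^2(2-e^{z_1})(\delta'-\delta)$ with no fixed sign. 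To establish positivity of the integrand I would use the rotation trick from Section 2: replace $z_2$ by $\tilde z_2=\lambda_1 z_2+\lambda_2 z_1$ with $\lambda_1$ close to $1$ and $\lambda_2$ small, transform the problem accordingly, and apply (\ref{pp***}) to the convex mollified data. This produces $u_{\tilde z_2\tilde z_2}>0$ on balls of radius $R/2$ for $R$ large, while the mixed derivative $u_{z_1 z_2}$ is of order $\lambda_2$. Since $\Delta a_{22}$ is bounded below by a positive constant on compacta while $\Delta a_{12}$ is bounded, for $\lambda_2$ small enough $\Delta a_{22}\,u_{\tilde z_2\tilde z_2}$ dominates $2\Delta a_{12}\,u_{z_1 z_2}$ on $\{|y|\le R/2\}$; outside this ball, the Gaussian decay of $p'^\epsilon$ from Theorem \ref{stroock} makes the contribution vanish as $R\uparrow\infty$.

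The finite boundary at $z_1=\log 2$ is handled by the constraint $\delta,\delta'\in C_c$: both strategies vanish there, so both value functions satisfy the same one-dimensional heat equation (\ref{ueq**}) with identical data at $z_1=\log 2$, whence $w\equiv 0$ on that boundary. The fundamental solution appearing in the Duhamel formula is then taken with respect to zero-Dirichlet data on $\{z_1=\log 2\}$ and remains strictly positive in the interior by the strong maximum principle. Passing to the limits in the mollification parameters and $R\uparrow\infty$ recovers $w(\tau,\cdot)>0$ for every $\tau\in(0,T]$. The main obstacle is precisely that $A'^\epsilon - A^\epsilon$ is \emph{not} positive semidefinite: because the off-diagonal $(1,2)$ and the diagonal $(2,2)$ entries are driven by the same scalar strategy, the hypothesis $(x-2\delta)^2<(x-2\delta')^2$ does not yield the PSD difference required in Section 2, and the domination of the cross-term by the $z_2z_2$-term must be made quantitative through the rotation, relying essentially on the fact that the payoff depends on $z_2$ alone.
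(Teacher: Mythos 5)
Your proposal follows essentially the same route as the paper's own proof: regularize to strict ellipticity, mollify and cut off the payoff, rotate coordinates by a small angle so that the convexity of the $z_2$-univariate payoff transfers (via the Green's identity/adjoint derivative-exchange of Section 2) to positivity of the dominant second derivative, write the difference of value functions by the Duhamel formula and dominate the cross term, exploit $\delta,\delta'\in C_c$ to make the boundary condition at $z_1=\log 2$ strategy-independent, and pass to the limits. The only cosmetic deviations are that you keep the Dirichlet fundamental solution on the domain where the paper reduces to zero boundary data and extends trivially to the whole space, and that you spell out explicitly (coefficient differences, non-PSD of $A'-A$, quantitative domination of $\Delta a_{12}u_{z_1z_2}$ by $\Delta a_{22}u_{\tilde z_2\tilde z_2}$) what the paper leaves implicit in its final "partial integration" step.
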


\begin{proof} 
For small positive angle $\theta$ consider the transformed coordinates
 \begin{equation}
 \left( \begin{array}{ll}
 \tilde{z}_1 \\ \tilde{z_2}
 \end{array}\right) =\left( \begin{array}{ll}
\cos(\theta)& \sin(\theta)\\ -\sin(\theta) & \cos(\theta)
 \end{array}\right) \left( \begin{array}{ll}
 z_1 \\ z_2
 \end{array}\right) 
\end{equation}
Multiplying with the inverse (rotation by $-\theta$),
we observe that $-\sin(-\theta)\tilde{z_1}+\cos(-\theta)\tilde{z_2}=\sin(\theta)\tilde{z_1}+\cos(\theta)\tilde{z_2}=z_2$ such that
both coefficients $\sin(\theta),\cos(\theta)$ in the sum representation of $z_2$ are positive 
for $\theta$ positive and small. 
Let  $u^{\delta,\theta }$ and $u^{\delta',\theta}$ denote the value functions in rotated coordinates. Recall that we have a reduced boundary condition which does not depend on $\delta$ (due to the application of the classical Hajek result on the boundary). 
For the sake of comparison of both functions we can reduce to zero boundary conditions and extend both functions trivially to the whole space. These trivial extensions to the whole space may be denoted still by   $u^{\delta,\theta }$ and $u^{\delta',\theta}$ for simplicity.
The latter reduction  simplifies classical  
representations of the value functions $u^{\delta}$ and $u^{\delta'}$ and classical representations of their derivatives up to second order. Especially, we can avoid the boundary terms in these representations (boundary layer terms).
Here, by classical representations we mean the classical representations of solutions of initial boundary value  problems in terms of the fundamental solutions. 
\begin{rem}
Such reductions to zero boundary conditions seem to be not familiar to all readers in the probabilistic community. We shall give a more detailed 
description of this step in the more interesting case of multivariate symmetric passport options in a subsequent paper.  
\end{rem}

In this context we remark that for almost all regular functions $\delta$ the fundamental solution is well-defined. Hence in these
transformed extended coordinates, the problem is defined on the whole space where initial data are defined as a 
payoff of a weighted sum $ (\exp(z_2)-1)^+= (\exp(\sin(\theta)\tilde{z_1}+\cos(\theta)\tilde{z_2})-1)^+$.  
For a payoff  $f$ 
define for small $\delta_0>0$ and large $R$ an approximation  of the payoff function
\begin{equation}
f_{\delta_0}^R(w)=:\left\lbrace\begin{array}{ll}
f(w)~\mbox{if}~|w|\leq R,\\
\\
f(w)\exp(-\delta_0|w-R|^2)~\mbox{if}~|w|> R,
\end{array}\right. 
\end{equation}
and let $f_{\epsilon,\delta_0}^R$ be a smoothed version of $f_{\delta,0}^R$ (smoothing close to identity). Note that the function  
$f_{\epsilon,\delta_0}^R$ is in $H^2\cap C^2$.
Let $u^{\delta,\theta,\epsilon,\delta_0,R}$ be a value function of the regularized (i.e., strictly 
elliptic approximation) form of the equation (\ref{ueq}) in rotated coordinates with data 
$ f_{\epsilon,\delta_0}^R(\sin(\theta)\tilde{z_1}+\cos(\theta)\tilde{z_2})$,  let
 $p^{\delta,\theta,\epsilon,\delta_0,R}$ be the corresponding 
 fundamental solution, and let  $p^{*,\delta,\theta,\epsilon,\delta_0,R}$ be its adjoint 
 (backward and forward equation density in probabilistic terms). The approximative value function itself 
 and the multivariate spatial derivatives of order $|\alpha|\leq 2$  have essentially the representation 
\begin{equation}\label{veps2}
D^{\alpha}_{\tilde{z}}u^{\delta,\theta,\epsilon,\delta_0,R}(\tau,\tilde{z_1},\tilde{z_2})
=\int_0^{\tau}\int_{{\mathbb R}^n}
f_{\epsilon,\delta}^R(\xi)D^{\alpha}_{\tilde{z}}p^{\delta,\theta,\epsilon,\delta_0,R}
(\tau,\tilde{z_1},\tilde{z_2};\sigma,\xi_1,\xi_2)
d\xi_1 d\xi_2 d\sigma,
\end{equation}
where we can  suppress the identical boundary term due to the reduction indicated above.
Next, for $w=(w_1,w_2)$ and $\sigma<s<\tau$  define 
\begin{equation}
\begin{array}{ll}
\overline{v}(s,w)=p^{\delta,\theta,\epsilon,\delta_0,R}
(s,w;\sigma,\xi_1,\xi_2)\\
\\
\overline{u}(s,w)=p^{*,\delta,\theta,\epsilon,\delta_0,R}
(s,w;\tau,\tilde{z_1},\tilde{z_2}).
\end{array}
\end{equation}
Let $L\overline{v}=0$ and $L^*\overline{u}=0$ abbreviate the equations for $\overline{u},\overline{v}$ (approximative equations for (\ref{ueq})). 
We may assume that $\epsilon >0$ is small enough such that $\sigma+\epsilon <s<\tau-\epsilon$. Integrating over the domain 
$[\sigma+\epsilon ,\tau-\epsilon]\times B_R$ (where $B_R$ is the $2$-dimensional ball of radius $R$ around the origin) we have
\begin{equation}\label{int1}
\begin{array}{ll}
0&=\int_{\sigma+\epsilon}^{\tau-\epsilon}\int_{B_R}(\overline{u}L\overline{v}-\overline{v}L^*\overline{u})(s,w)dwds\\
\\
&=\int_{B_R}(\overline{u}(\tau-\epsilon,w)\overline{v}(\tau-\epsilon,w)-\overline{v}(\sigma+\epsilon,w)\overline{u}(\sigma+\epsilon,w))dw+r^*_{B_R},
\end{array}
\end{equation}
where $\lim_{R\uparrow \infty}r^*_{B_R}=0$ for the reminder term which 
follows from $C^3$-regularity of coefficients
and an a priori estimates of the densities. Next, for directions $h$, consider finite difference quotients 
$D^{+}_h\overline{u}(s,w)=\frac{\overline{u}(s,w+h)-\overline{u}(s,w)}{h}$, $D^{-}_h\overline{u}(s,w)=\frac{\overline{u}(s,w)-\overline{u}(s,w-h)}{h}$,  
 $D^2_h\overline{u}(s,w)=\frac{D^+_h\overline{u}(s,w)-D^-_h\overline{u}(s,w)}{h}$, and for multiindices $\alpha$,
let $D^{\alpha}_h$ denote the coordinate versions of these difference equations. Then from (\ref{int1}), we get for all $0\leq |\alpha|\leq 2$
\begin{equation}\label{int2}
\begin{array}{ll}
0&=\int_{{\mathbb R}^2}((D^{\alpha}_h\overline{u})(\tau-\epsilon,w)\overline{v}(\tau-\epsilon,w)-(D^{\alpha}_h(\overline{v})(\sigma+\epsilon,w)
\overline{u}(\sigma+\epsilon,w))dw\\
\\
&=\int_{{\mathbb R}^2}((D^{\alpha}_h\overline{u})(\tau-\epsilon,w)p^{\delta,\theta,\epsilon,\delta_0,R}
(\tau-\epsilon,w;\sigma,\xi_1,\xi_2)\\
\\
&-(D^{\alpha}_h\overline{v})(\sigma+\epsilon,w)p^{*,\delta,\theta,\epsilon,\delta_0,R}
(\sigma+\epsilon,w;\tau,\tilde{z_1},\tilde{z_2}))dw
\end{array}
\end{equation}
We conclude that 
\begin{equation}
(D^{\alpha}_{\tilde{z}}\overline{v})(\tau,\tilde{z})=(D^{\alpha}_{\xi}\overline{u})(\sigma,\xi).
\end{equation}
Hence we can do partial integration and obtain comparison for arbitrary small $\theta$ which is preserved in the limit $\theta\downarrow 0$.
\end{proof}
%
%
The regular control  space $C^3$ does not contain the volatility-maximizing function  $ I(z_1 \leq 0)= I\exp(z_1)\leq1)$ or  $ I(x \leq 1)$ of the  portfolio term.
Define the sequence of functions $h^{\epsilon}$, where
\begin{equation}
h^{\epsilon}(z_1)=\left\lbrace \begin{array}{ll}
1~\mbox{if $z_1\leq -\epsilon$},\\
\\
\exp\left(-1-\frac{\epsilon}{z_1}\right)~\mbox{if $-\epsilon \leq z_1\leq 0$},\\
\\
0~\mbox{else}.
\end{array}\right.
\end{equation}
Let $\delta^{\epsilon}\in C^{\infty}$ be defined by a convolution of $h^{\epsilon}$ with a smoothing Gaussian kernel which is close to identity. 
Then this a sequence of functions $\delta ^{\epsilon}$  which is monotonically increasing as $\epsilon$ decreases and 
$\lim_{\epsilon\downarrow 0}\delta^{\epsilon}=\delta^{opt}= I(x \leq 1)$. According to the Comparison Theorem we have
\begin{equation}
 v^{\delta^{\epsilon}}(t,x,y) = \lim_{\epsilon\downarrow 0} \E^N[(X^{\Delta^{\epsilon}}_N(T) - 1)^+|S_N(t) = x, X^{\Delta^{\epsilon}}_N(t) =y]
\end{equation}
and stochastic ODE theory shows that the limit  $v^{\delta^{opt}}(t,x,y)$ exists as $\epsilon \downarrow 0$.

\begin{thm}[Optimal strategy] The optimal strategy maximizing $\E^N[X_N(T) - K]^+$is given by
\begin{equation}
 d\bar{X}_N(t)
 = \frac12 \left(S_N(t) - 2  \cdot I(S_N(t) \leq 1))\right) \sigma  \bar{X}_N(t) dW^N(t).
\end{equation}
\end{thm}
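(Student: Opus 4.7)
The plan is to combine pointwise maximization of the portfolio diffusion coefficient with the Comparison Theorem just established, and to justify the limit from smooth controls to the bang-bang optimizer via the mollified sequence constructed in the preceding paragraph. First, from the dynamics (\ref{xn}) the diffusion coefficient of $X_N$ depends on the strategy only through $X_N(t) - 2\Delta^S(t)$, which is affine in $\Delta^S$ on the admissible interval $[0, X_N(t)/S_N(t)]$; hence its magnitude is maximized at one of the endpoints. Comparing $|X_N|$ at $\Delta^S=0$ with $X_N(2-S_N)/S_N = X_N M_N/S_N$ at $\Delta^S = X_N/S_N$, and using $M_N+S_N=2$, the second exceeds the first precisely when $S_N \leq 1$. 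Thus the pointwise maximizer is $\Delta^{S,\star}(t) = (X_N(t)/S_N(t))\, I(S_N(t) \leq 1)$, and substituting into (\ref{xn}) yields exactly the SDE displayed in the statement.

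Next, the Comparison Theorem applied inside the regular class $C^3_c$ says that for any $\delta,\delta' \in C^3_c$ with $(x-2\delta)^2 < (x-2\delta')^2$ pointwise, one has $v^{\delta} < v^{\delta'}$. Consequently, within smooth strategies the value function is ordered by the pointwise magnitude of the portfolio diffusion, and the best smooth strategies are those approximating $\Delta^{S,\star}$ from below in that magnitude. The mollified sequence $\delta^{\epsilon} \in C^{\infty}$ built just before the theorem is admissible in $C^3_c$, is monotone in $\epsilon \downarrow 0$, and converges pointwise to $I(S_N \leq 1)$. By the Comparison Theorem the family $v^{\delta^{\epsilon}}$ is monotone increasing, and standard SDE stability under locally Lipschitz coefficients yields $X^{\delta^{\epsilon}}_N(T) \to \bar X_N(T)$ in probability, so $v^{\delta^{\epsilon}}(t,x,y) \to v^{\Delta^{S,\star}}(t,x,y) = \E^N[(\bar X_N(T)-1)^+ \mid S_N(t)=x,\bar X_N(t)=y]$ by uniform integrability of the bounded family $(X^{\delta^{\epsilon}}_N(T)-1)^+$. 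Optimality then extends from $C^3_c$ to the full natural Lebesgue-measurable control space by density and by the uniqueness of the viscosity solution of the HJB equation under assumption HEHJB.

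The principal obstacle is that the bang-bang optimizer $I(S_N \leq 1)$ does not belong to $C^3_c$, so the Comparison Theorem cannot be applied to it directly; the entire argument must pass through $\delta^{\epsilon}$ and rely on the stability of both the SDE and the HJB viscosity solution under this regularization. A secondary delicate point is the interior discontinuity at $S_N=1$, which lies strictly inside the domain $(-\infty,\ln 2]$ for $z_1$: one must verify that the resulting piecewise SDE for $\bar X_N$ is pathwise well posed and that the mollification recovers the correct Filippov-type solution. This follows in our setting because the limit drift and diffusion are globally bounded on the relevant range $S_N \in [0,2]$ and depend measurably on the state, and because the monotone convergence of $v^{\delta^{\epsilon}}$ forces any other admissible limit to coincide with $v^{\Delta^{S,\star}}$.
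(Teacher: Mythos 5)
Your proposal is correct and follows essentially the same route as the paper: the affine dependence of the $dW^N$ term on $\Delta^S$ forces an endpoint maximizer, the endpoint comparison reduces to $S_N\leq 1$ (equivalently $S\leq M$), and optimality is transferred from smooth controls to the bang-bang limit via the Comparison Theorem together with the mollified sequence $\delta^{\epsilon}$, which the paper treats in the paragraph immediately preceding the theorem rather than inside the proof. Two minor quibbles: the family $(X^{\delta^{\epsilon}}_N(T)-1)^+$ is not bounded (the wealth relative to the index is an unbounded nonnegative martingale, so uniform integrability should instead come from moment bounds on the stochastic exponential with integrand bounded by $\sigma$), and the appeal to assumption HEHJB is borrowed from the classical multivariate passport setting rather than being stated for this symmetric product, so that step is an added justification beyond what the paper claims rather than something the paper relies on.
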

\begin{proof}
According to the Comparison theorem, the optimal strategy maximizes the absolute value of the 
$dW^N(t)$ term. The optimal position $\Delta^S(t)$  is attained at one of the ends of 
the interval for its possible range. 
When $\Delta^S(t) = 0$, the absolute value reduces to $$X_N(t).$$ When $\Delta^S(t) = X_S(t)$, the 
absolute value is equal to 
$$
-X_N(t) + 2 X_S(t). 
$$
Thus $\Delta^S(t) = 0$ is optimal when
$$
X_N(t) \geq -X_N(t) + 2X_S(t), 
$$
which is equivalent to
$$
S(t) \geq M(t).
$$
\begin{equation}
 \bar\Delta^S(t) = \begin{cases}
               X_S(t), & S(t) \leq M(t), \\
               0, & S(t) \geq M(t),
               \end{cases}
\end{equation}
and
\begin{equation}
 \bar\Delta^M(t) = \begin{cases}
               0, & S(t) \leq M(t), \\
               X_M(t), & S(t) \geq M(t),
               \end{cases}
\end{equation}
More succinctly, 
\begin{equation}
  \bar\Delta^S(t) = \bar{X}_S(t) \cdot I(S_N(t) \leq 1), \qquad  \bar\Delta^M(t) = \bar{X}_M(t) \cdot I(S_N(t) \geq 1).
\end{equation}
Thus it is optimal to be fully invested in the weaker asset. The evolution of the optimal portfolio is 
given by
\begin{eqnarray}
 d\bar{X}_N(t) &=& \frac12 \left(\bar{X}_N(t) - 2 \Delta^S(t)\right) \sigma  S_N(t) dW^N(t) \\
\end{eqnarray}
\end{proof}

\newpage

\appendix{Appendix A: Value function representation, the adjoint, and derivatives}
\newline

We consider some observations of Section 3 in more detail. For $x,y,h\in {\mathbb R}^n$ fixed we consider
\begin{equation}
\begin{array}{ll}
v(\sigma,z)=p(\sigma,z;s,y+h),~u(\sigma,z)=p^*(\sigma,z;t,x+h),~ t>s.
\end{array}
\end{equation}
We have  $Lv=0$ and $L^*u=0$ , and  integrating equation (\ref{addiff0}) over $[s+\epsilon,t-\epsilon]\times B_R$ we  have
\begin{equation}\label{addiff1g}
\begin{array}{ll}
\int_{B_R}u(t-\epsilon,z)v(t-\epsilon,z)-u(s+\epsilon,z)v(s+\epsilon,z)dz=\\
\\
\int_{s+\epsilon}^{t-\epsilon}\int_{\partial B_R}(\sum_{i=1}^n\frac{\partial}{\partial x_i}\left[\sum_{j=1}^n
\left(u a_{ij}\frac{\partial v}{\partial x_i}-va_{ij}\frac{\partial u}{\partial x_j}-uv\frac{\partial a_{ij}}{\partial x_j}\right)\right](\sigma,z)dSd\sigma.
\end{array}
\end{equation} 
$a_{ij}$ and $\frac{\partial a_{ij}}{\partial x_j}$ are usually assumed to be bounded. However for the purpose of estimating the right side of (\ref{addiff1g}) it is sufficent that $a_{ij}$ is of linear growth and that all the other assumption in \cite{KS} are matched. In theis case we ahe apolynomial grwoth factor time a Gaussian where we have an  exponential decay as the Euclidean distances $|(\sigma,z)-(s,y+h)|$ and  $|(\sigma,z)-(s,x+h)|$ become large. Hence,  the right side of (\ref{addiff1g}) goes to zero as  $R\uparrow \infty$, and we have
\begin{equation}\label{addiff2g}
\begin{array}{ll}
\int_{{\mathbb R}^n}u(t-\epsilon,z)v(t-\epsilon,z)dz=\int_{{\mathbb R}^n}u(s+\epsilon,z)v(s+\epsilon,z)dz,\\
\\
\mbox{which is}~\\
\\
\int_{{\mathbb R}^n}v(t-\epsilon,z)p^*(t-\epsilon,z;t,x+h)dz=\int_{{\mathbb R}^n}u(s+\epsilon,z)p(s+\epsilon,z;s,y+h)dz
\end{array}
\end{equation} 
In the limit $\epsilon \downarrow 0$  in  the integrand on the left side we have  $p^*(t-\epsilon,z;t,x+h)\rightarrow \delta(z-(x+h))$ and in the integrand on the right side we have $p(s+\epsilon,z;t,x+h)\rightarrow \delta(z-(y+h))$  such that indeed
\begin{equation}
v(t,x+h)=u(s,y+h),
\end{equation}
where $h\in {\mathbb R}^n$ was free fixed choice.

 Hence, the relation holds for finite difference quotients
\begin{equation}
D^{+}_hu(s,y)=\frac{u(s,y+h)-u(s,y)}{h}, D^{-}_hu(s,y)=\frac{u(s,y)-u(s,y-h)}{h},
\end{equation}  
and 
\begin{equation}
D^{2}_{h_ih_j}u(s,y)=\frac{D^+_{h_i}u(s,y)-D^-_{h_i}u(s,y)}{h_j}=\frac{u(s,y+h_i)-2u(s,y)+u(s,y-h_i)}{h_jh_i}
\end{equation}
for second order finite differences. Applying  regularity of $v$ and $u$ ,
 for $h\downarrow 0$ (or $h_i,h_j\downarrow 0$ and any multiindex $\alpha$ we get indeed
\begin{equation}\label{pp*alpha}
 D^{\alpha}_xv(t,x)= D^{\alpha}_yu(s,y).
\end{equation}

For the understanding of Greek representations it is useful to reconsider for any fixed $h\in {\mathbb R}^n$ we first observe
\begin{equation}\label{addiff2g*}
\begin{array}{ll}
\lim_{\epsilon\downarrow 0}\int_{{\mathbb R}^n}u(t-\epsilon,z)v(t-\epsilon,z)dz=\lim_{\epsilon\downarrow 0}\int_{{\mathbb R}^n}u(s+\epsilon,z)v(s+\epsilon,z)dz,\\
\\
\mbox{iff}~\\
\\
p(t,x+h;s,y+h)=\lim_{\epsilon\downarrow 0}\int_{{\mathbb R}^n}p(t-\epsilon,z;s,y+h)p^*(t-\epsilon,z;t,x+h)dz\\
\\
=\lim_{\epsilon\downarrow 0}\int_{{\mathbb R}^n}p^*(s+\epsilon,z;t,x+h)p(s+\epsilon,z;s,y+h)dz=p^*(s,y+h;t,x+h)\\
\\
\mbox{for}~ t>s.
\end{array}
\end{equation}
Let $\{e_i\}_{1\leq i\leq n}$ be the Eucledean basis of ${\mathbb R}^n$. For any given $1\leq i\leq n$ and $h_i$ the mean value theorem inplies that for some $y^*_{ih1},y^*_{ih2}\in [y,y+h_ie_i]$ we have 
\begin{equation}\label{pa}
\begin{array}{ll}
\lim_{h_i\downarrow 0}\frac{p(t,x+h_ie_i;s,y+h_ie_i)-p(t,x;s,y+h_ie_i)}{h_i}\\
\\
=\lim_{h_i\downarrow 0}\frac{p(t,x+h_ie_i;s,y))-p(t,x;s,y)+p_{y_i}(t,x+h_ie_i,s,y^*_{ih1})h_i-p_{y_i}(t,x;s,y^*_{ih2})h_i}{h_i}\\
\\
=\lim_{h_i\downarrow 0}\frac{p(t,x+h_ie_i;s,y))-p(t,x;s,y)}{h_i}\\
\\
+\lim_{h_i\downarrow 0}(p_{y_i}(t,x+h_ie_i,s,y^*_{ih1})-p(t,x;s,y^*_{ih2}))\\
\\
=p_{x_i}(t,x;s,y)
\end{array}
\end{equation}
Similarly
\begin{equation}\label{pb}
\begin{array}{ll}
\lim_{h_i\downarrow 0}\frac{p^*(s,y+h_ie_i;t,x+h_ie_i)-p^*(s,y;t,x+h_ie_i)}{h_i}\\
\\
=\lim_{h_i\downarrow 0}\frac{p^*(s,y+h_ie_i;t,x))-p^*(s,y;t,x)+p^*_{x_i}(s,y+h_ie_i,t,x^*_{ih1})h_i-p_{x_i}(s,y;t,x^*_{ih2})h_i}{h_i}\\
\\
=\lim_{h_i\downarrow 0}\frac{p^*(s,y+h_ie_i;t,x))-p^*(s,y;t,x)}{h_i}\\
\\
+\lim_{h_i\downarrow 0}(p^*_{y_i}(s,y+h_ie_i,t,x^*_{ih1})-p(s,y;t,x^*_{ih2}))\\
\\
=p^*_{y_i}(s,y;t,x).
\end{array}
\end{equation}
Invoking (\ref{addiff2g*}) the terms in (\ref{pa}) and (\ref{pb}) are equal, i.e., for all $t>s$, $x,y\in {\mathbb R}^n$
\begin{equation} 
p_{x_i}(t,x;s,y)=p^*_{y_i}(s,y;t,x).
\end{equation}
Then we may start with these expressions and shifted versions
\begin{equation}
p_{x_i}(t,x+h;s,y+h)=p^*_{y_i}(s,y+h;t,x+h),
\end{equation}
and repeat the argument above which leads to identities as in (\ref{pp*alpha}).
The finite difference difference approximations above can be used in order to develop computation schemes for the Greeks (with derivative shifts for $f\in C^{|\alpha|}\cap H^{|\alpha|}$
\begin{equation}
D^{\alpha}_xv^f(t,x)=\int_{{\mathbb R}^n}f(y)D^{\alpha}_xp(t,x;t_0,y)dy=\int_{{\mathbb R}^n}f(y)D^{\alpha}_yp^*(t_0,y;t,x)dy.
\end{equation}

\newpage

\end{document}